\newtheorem{definition}{Definition}
\newtheorem{theorem}[definition]{Theorem}
\newtheorem{lemma}[definition]{Lemma}
\newcommand{\comment}[1]{}
\newcommand{\emtext}[1]{\text{\em #1}}
\newcommand{\sm}{\setminus}
\renewcommand{\epsilon}{\varepsilon}
\newcommand{\udg}{\textsc{udg}}
\newcommand{\conv}{\ensuremath{\text{\rm conv}}}
\newcommand{\dist}{\ensuremath{\text{\rm dist}}}
\newcounter{ccount}
\newcommand{\case}{\refstepcounter{ccount}\medskip\noindent{\em Case \arabic{ccount}:~}}
\title{Colouring stability two unit disk graphs}
\author{Henning Bruhn}
\date{}
\begin{document}
\maketitle

\begin{abstract}
We prove that every stability two unit disk graph 
has chromatic number at most $\frac{3}{2}$ times its clique number. 
\end{abstract}

\section{Introduction}

 A unit disk graph (or \udg\ for short) is defined on a point set in the plane, where 
two points are considered as adjacent vertices if their distance is 
at most one. As a very basic model for wireless devices, unit disk graphs
as well as more sophistacted variants have attracted quite a lot 
of interest, and there exists an extensive literature on their 
subject. One of the earliest application is due to Hale~\cite{Hale80}
who considered them in the context of the frequency assignment problem.
There, the task is to assign different frequencies to the wireless
devices in order for them to communicate with a base station without
interference. An edge between two points is understood as the devices
being close enough to interfere with each other, so that an
assigned frequency corresponds to a stable set in the graph. The 
frequency assignment problem then becomes a graph colouring problem,
and naturally, it is desired to assign as few frequencies as possible. 

In this article I will treat the colouring  of a
unit disk graph from a structural point of view. Unit disk graphs
distinguish themselves from general graphs in that their
chromatic number can be 
upper-bounded in terms of the clique number. 
The best known bound  is 
due to Peeters:

\begin{theorem}[Peeters~\cite{Peeters91}]
A unit disk graph $G$ can be coloured with at most $3\omega(G)-2$
colours.
\end{theorem}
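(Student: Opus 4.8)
The plan is to derive the bound from a geometric greedy colouring. I order the vertices of $G$ by increasing $x$-coordinate, breaking ties by $y$-coordinate, and colour them one at a time in this order, always assigning the smallest colour not already present on an earlier neighbour. For any such greedy procedure the number of colours used is at most $1+d$, where $d$ is the maximum number of earlier neighbours that a vertex can have. It therefore suffices to prove that every vertex has at most $3\omega(G)-3$ neighbours preceding it in the order, since this yields the bound $3\omega(G)-2$.

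So fix a vertex $v$ and let $L$ be its set of earlier neighbours. Since each $u\in L$ satisfies $x(u)\le x(v)$ and $\|u-v\|\le 1$, the set $L$ lies entirely in the closed left half-disk $D=\{p:\|p-v\|\le 1,\ x(p)\le x(v)\}$ centred at $v$. This half-disk subtends $180^\circ$ at $v$, so I partition it into three circular sectors of central angle $60^\circ$ and radius $1$, splitting the shared bounding rays between adjacent sectors so that the three parts genuinely partition $L$. The crucial geometric fact is that no two points of such a sector lie more than $2\sin 30^\circ=1$ apart; that is, the sector has diameter exactly $1$, and hence all its points are pairwise within distance $1$.

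Consequently, the vertices of $L$ falling into a single sector, together with $v$, are pairwise adjacent and so form a clique, whose size is at most $\omega(G)$; thus each sector contains at most $\omega(G)-1$ vertices of $L$. Summing over the three sectors gives $|L|\le 3(\omega(G)-1)=3\omega(G)-3$, exactly the bound on the back-degree that we needed. The step demanding the most care is the geometric claim that a $60^\circ$ sector of radius $1$ has diameter $1$: this is precisely what simultaneously lets three such sectors tile the half-disk and lets each of them form a clique with $v$. I would also be careful to assign the points lying on the common boundary rays to a single sector, so that no vertex of $L$ is counted twice; beyond that, the argument is the routine analysis of greedy colouring.
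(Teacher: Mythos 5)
Your proposal is correct. Note that the paper does not prove this statement at all---it is quoted as a known result of Peeters, with the paper's own contribution starting only at Theorem~2---so there is no internal proof to compare against; your argument (greedy colouring in order of increasing $x$-coordinate, with the back-neighbourhood lying in a half-disk that splits into three $60^\circ$ sectors of diameter $1$, each spanning a clique with $v$) is precisely the standard proof of Peeters' bound. One small point of rigour: the computation $2\sin 30^\circ=1$ only bounds the distance between the two extreme arc points, whereas the diameter claim for the whole sector needs the general estimate $\|p-q\|^2=\|p\|^2+\|q\|^2-2\|p\|\|q\|\cos\theta\leq\max\bigl(\|p\|^2,\|q\|^2\bigr)$ valid whenever $\theta\leq 60^\circ$; this is exactly the fact you flagged as needing care, and it does hold.
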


How good is that bound? Malesi{\'n}ska, Piskorz and Wei\ss enfels~\cite{MPW98}
gave the following lower bound. 
Consider the class of graphs 
$C_n^k$ on vertex set $0,\ldots, n-1$
where $i$ and $j$ are adjacent if $|i-j|\leq k-1$. Observe that
we may realise any $C_n^k$ as a unit disk graph by placing 
the vertices at equidistance on a circle of appropriate radius. 
Moreover, we see that $C_{3k-1}^k$ has stability $\alpha=2$ and
clique number $\omega=k$,
from which we deduce for the chromatic number that
 $\chi(C_{3k-1}^k)\geq \frac{3k-1}{2}=\frac{3}{2}\omega-\frac{1}{2}$.

Clearly, between an upper bound of essentially $3\omega$ and a lower bound
of $\frac{3}{2}\omega$ there is quite a bit of scope for improvement. 
In this article, we will prove 
that the lower bound is the true one if the unit disk graph 
has, in addition, at most stability two:

\begin{theorem}\label{mainthm}
A unit disk graph $G$ with $\alpha(G)\leq 2$ can be coloured with at most 
$\frac{3}{2}\omega(G)$ colours.
\end{theorem}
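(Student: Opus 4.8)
The plan is to translate the colouring problem into a matching problem in the complement and then to localise it geometrically. First I would record the identity $\chi(G)=|V(G)|-\nu(\overline{G})$, where $\nu$ is the matching number and $\overline{G}$ is the ``far graph'' (adjacent iff at distance $>1$): since $\alpha(G)\le2$, every colour class is a vertex or a non-edge of $G$, so a colouring is precisely a matching of $\overline{G}$ (the $2$-element classes) together with singletons, and minimising the number of classes amounts to maximising the matching. Applying the Tutte--Berge formula, there is a set $S$ with $\nu(\overline{G})=\tfrac12\bigl(|V(G)|-o(\overline{G}-S)+|S|\bigr)$, where $o$ counts the odd components; writing $C_1,\dots,C_m$ for the components of $\overline{G}-S$, a one-line computation turns the desired bound $\chi(G)\le\tfrac32\omega(G)$ into
\[
\sum_{i=1}^m |C_i|\;+\;o(\overline{G}-S)\ \le\ 3\,\omega(G).
\]

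Two observations make this local. On the one hand, vertices in different components $C_i$ are non-adjacent in $\overline{G}$, hence adjacent in $G$, so the union over $i$ of a maximum clique of $G[C_i]$ is again a clique, giving $\sum_i\omega(G[C_i])\le\omega(G)$. On the other hand each $G[C_i]$ is itself a unit disk graph with $\alpha\le2$ whose complement $\overline{G}[C_i]$ is connected. Hence the displayed inequality, and with it the whole theorem, follows by summation from the following local statement applied to every component: \emph{if $H$ is a unit disk graph with $\alpha(H)\le2$ and connected complement, then $|V(H)|\le 3\omega(H)-1$ when $|V(H)|$ is odd and $|V(H)|\le 3\omega(H)$ when $|V(H)|$ is even.} It is here, and only here, that the unit disk hypothesis must be used in place of mere triangle-freeness of $\overline{H}$: the triangle-free, connected, independence-number-$5$ Clebsch graph on $16$ vertices would violate the even case, so it cannot be the complement of a unit disk graph.

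To prove this Local Lemma I would take a diametral pair $p,q$ of the point set; connectivity of $\overline{H}$ forces $|pq|>1$, and $\alpha(H)\le2$ forbids any point far from both $p$ and $q$, so every point lies within distance $1$ of $p$ or of $q$. This partitions $V(H)$ into the two ``caps'' $A$ (near $p$, far from $q$) and $B$ (near $q$, far from $p$) and the lens $L$ (near both). A three-point argument shows that $A$ and $B$ are cliques, so $\omega(H)\ge\max(|A|,|B|)$, and it remains to control $|L|$. The geometry of the lens $B(p,1)\cap B(q,1)$ enters through the threshold $|pq|=\sqrt3$: for $|pq|\ge\sqrt3$ the lens has diameter at most $1$ and is a third clique, whence $|V(H)|\le 3\omega(H)$; for $|pq|<\sqrt3$ the lens is ``tall'' and may contain far pairs, but then the whole configuration is confined to a region of small diameter, which should force a clique large enough to absorb $L$.

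The main obstacle is exactly the Local Lemma, and within it the lens. Squeezing out the parity gain $3\omega(H)-1$ in the odd case, and handling the regime $|pq|<\sqrt3$ where $L$ fails to be a clique, are the two points where the planar metric has to do genuine work beyond triangle-freeness. I expect to need a finer geometric decomposition of the lens, most likely combined with an induction on $\omega(H)$ in which one peels off a clique-cap and argues that the clique number drops, in order to push both of these through.
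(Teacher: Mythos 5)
Your reduction is sound and in fact mirrors the paper's own proof: the paper also passes to the complement and invokes matching theory (the Edmonds--Gallai decomposition rather than Tutte--Berge, an inessential difference), and your ``Local Lemma'' is precisely what the paper extracts from its key lemma, namely that the vertex set of any stability-two unit disk graph can be partitioned into three cliques not all of the same size --- this gives $|V(H)|\le 3\omega(H)-1$ when $|V(H)|$ is odd and $|V(H)|\le 3\omega(H)$ in general. Your large-diameter case is also essentially the paper's: if $p,q$ are at distance at least $\sqrt 3$, then the lens $N(p)\cap N(q)$ is a clique. (The parity gain you worry about in this case is recoverable by a one-line trick: $p$ is complete to the lens, so $A$, $B$, $L+p$ are three cliques covering $V(H)$ with $p$ shared by two of them; removing $p$ from the larger of the two cliques containing it yields a partition into three cliques of not-all-equal sizes, hence the $-1$ in the odd case.)

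The genuine gap is the small-diameter regime, and it is not a loose end but the entire technical content of the theorem. When all pairwise distances are at most $\sqrt 3$, the whole graph fits inside a single unit disk (paper's Lemma 5); then the lens degenerates into ``almost everything'', your caps $A$ and $B$ can be tiny, and no diametral-pair argument produces three cliques. The paper spends its longest section exactly here: it first analyses \emph{hollow} unit disk graphs (all vertices on the boundary of the convex hull) via crossing-edge arguments, shows that the boundary vertices split into three consecutive boundary-interval cliques, then places a universal vertex $p$ at the disk's centre, cones the boundary intervals to five cliques $B^+,B^-,T^+,T^-,R$, and finally runs a delicate case analysis to redistribute the two transition cliques $T^\pm$ among the other three. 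Your proposed substitutes --- ``a finer geometric decomposition of the lens'' and ``an induction on $\omega(H)$ in which one peels off a clique-cap'' --- are hopes rather than arguments: removing a clique from a stability-two unit disk graph need not decrease the clique number of the remainder in any controlled way, and there is no evident inductive invariant preserved by such peeling. Until the statement ``every stability-two unit disk graph contained in a unit disk is the union of three cliques, two of which share a vertex'' (or your equivalent local bounds) is actually proved, your proposal establishes the theorem only for graphs in which the relevant components have geometric diameter at least $\sqrt 3$.
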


The theorem shows that, at the very least, the example for the lower
bound cannot easily be improved. Moreover, I contend that it gives some evidence
for believing that the true bound is closer to $\frac{3}{2}\omega$ than to 
Peeters' bound. 

There is some more evidence for this belief. 
There are two classes of \udg s, for which it is known that their
chromatic number is bounded by $\frac{3}{2}\omega$. 
The first of these is the class of triangle-free
\udg s: A triangle-free \udg\
is planar, see Breu~\cite{BreuPHD}, and thus by Gr\"otzsch' theorem $3$-colourable~\cite{Gr59}.
The second class consists of augmentations of induced subgraphs of the triangular
lattice in the plane. McDiarmid and Reed~\cite{McDR00}
show that these can be coloured with at most $\frac{4\omega+1}{3}$
colours.
Another piece of evidence is provided by McDiarmid~\cite{McDiarmid03},
who investigates fairly general models of 
random unit disk graphs. In that context, it turns out
that with high probability the 
chromatic number is very close to the clique number, much closer even 
than the factor of $\frac{3}{2}$ of the lower bound. 
Finally, considering fractional instead of ordinary colourings,
Gerke and McDiarmid~\cite{GMcD01} prove that the fractional chromatic
number is bounded by
$2.2 \omega(G)$ for any unit disk graph $G$.

\medbreak

Optimisation problems in \udg s, and in particular, colouring \udg s algorithmically
have
attracted some attention. We just mention
Marathe et al.~\cite{MBHRR95} who give a $3$-approximation colouring algorithm
and the result by
Clark, Colbourn and Johnson~\cite{CCJ90} that $3$-colourability remains NP-complete for \udg s.
We refer to 
Balasundaram and Butenko~\cite{BB08} for a survey on several optimisation problems in \udg s.

\medbreak

The paper is organised as follows. After briefly stating some of the basic definitions 
that we are going to use we will proceed with the proof of our main result, Theorem~\ref{mainthm},
in Section~\ref{main}. The key lemma on which 
the proof of Theorem~\ref{mainthm} rests will be deferred to Section~\ref{keysec}. 
In Section~\ref{geom} we will discuss which geometric insights will be exploited.

\section{Definitions}
For general graph-theoretic notation and concepts we refer to Diestel~\cite{diestelBook10}.

 Let $G$ be 
a graph. A \emph{clique} of $G$ is a subgraph in which any two 
vertices are adjacent. A \emph{stable set} of $G$ is a subgraph
or vertex set so that no two vertices are adjacent. The size of 
the largest clique is denoted by $\omega(G)$, while the 
size of the largest stable set is $\alpha(G)$, the \emph{stability of $G$}.
We denote the chromatic number by $\chi(G)$, and define $\overline\chi(G)$,
the \emph{clique partition number}, 
to be the chromatic number of the completement of $G$. 
A vertex $v$ is \emph{complete to} some vertex set $X$ if every vertex in $X$
is adjacent to $v$. A vertex set $U$ is \emph{complete to} $X$ is every
vertex in $U$ is complete to $X$.

A \emph{unit disk} is a closed disk of radius~$1$ in the plane.
Unit disk graphs can be represented in two ways: In the intersection model 
the vertices are unit disks in the plane and two of them are adjacent
if and only if the disks intersect; and in the distance model, the vertex set is 
a point set in the plane, and  any two vertices 
are adjacent if and only if their distance is at most~$1$.
\emph{We work exclusively with the distance model.}

Moreover, we always see a unit disk graph as a concrete geometric object, that is, 
the vertex set is indeed a subset of points in $\mathbb R^2$. 
So, every vertex \emph{is} a point in the plane. As a consequence
we do not allow two vertices to be represented by the same point.
It is not hard to check, however, that this is no restriction 
for our purposes: Our main theorem remains valid if this requirement is dropped.

For two points $x,y\in\mathbb R^2$ we denote by $\dist(x,y)$
the Euclidean distance in the plane. If $X$ is a point set in $\mathbb R^2$
then we let  $\conv(X)$ be the convex hull of the points in $X$. As 
a shorthand we set $\conv(G):=\conv(V(G))$.
We say that a line $L$ \emph{separates} a point $p$ from a point set $X$
if $p$ lies in one of the closed half-planes defined by $L$, while $X$ is contained in 
the other. 

For any set $X$ and any $x$ we use $X+x$ to denote $X\cup\{x\}$.

\section{Proof of main theorem}\label{main}

The proof Theorem~\ref{mainthm} rests on the Edmonds-Gallai decomposition
as well as a key lemma, which will be proved in the course of the following
two sections. 

\begin{lemma}\label{cliques}
Let $G$ be a unit disk graph with $\alpha(G)\leq 2$. 
Then $V(G)$ can be partitioned into three cliques, two of which 
have different cardinalities.
\end{lemma}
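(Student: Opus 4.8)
The plan is to read the statement geometrically: partitioning $V(G)$ into three cliques is the same as covering the point set by three sets of diameter at most $1$, since in the distance model a clique is exactly a set of pairwise distance $\le 1$. If $\conv(G)$ has diameter at most $1$, then $V(G)$ is already a single clique and there is nothing to do (split it as $|V(G)|,0,0$, which gives two distinct cardinalities as soon as $V(G)\neq\es$). So I may assume there are two points at distance greater than $1$, and I would take a diameter pair $u,v$, i.e.\ a pair realising the maximum distance $D:=\dist(u,v)>1$; then every point lies in $\conv$ of $B(u,D)\cap B(v,D)$, and $u\not\sim v$.

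Next I would peel off two cliques purely combinatorially. Set $A=\{w:\dist(w,u)\le 1\}$ and $B=\{w:\dist(w,v)\le 1\}$. Every vertex lies in $A\cup B$, for a point outside both would be non-adjacent to $u$ and to $v$, which are themselves non-adjacent, giving a stable set of size three and contradicting $\alpha(G)\le 2$. Moreover $A\sm B$ is a clique: if $a,a'\in A\sm B$ then both are non-adjacent to $v$, so $a\not\sim a'$ would make $\{a,a',v\}$ stable. Symmetrically $B\sm A$ is a clique. This reduces the entire problem to covering the remaining vertices $A\cap B$, which lie in the unit lens $B(u,1)\cap B(v,1)$, by a single clique.

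Here the geometry enters, and I expect this to be the main obstacle. It is worth noting first that $\alpha(G)\le 2$ only says that the complement $\overline G$ is triangle-free, which in general does not bound $\overline\chi$ at all; so the geometry of the distance model must be used in an essential way. The unit lens has diameter $\sqrt{4-D^2}$, hence as soon as $D\ge\sqrt 3$ the set $A\cap B$ already has diameter at most $1$ and is a clique, completing the proof. The hard regime is $1<D<\sqrt 3$, where the lens is too wide and $A\cap B$ may genuinely fail to be a clique. To handle it I would exploit that $u,v$ realise the diameter: in any triangle $uwv$ the side $uv$ is longest, so every point $w$ (lens points included) sees $uv$ under an angle of at least $60^\circ$, and all pairwise distances are bounded by $D$ rather than merely by the lens width; moreover the two tips of the unit lens lie at distance exactly $1$ from both $u$ and $v$. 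The idea is to cut $A\cap B$ by the line through $u$ and $v$ and to argue, using these metric constraints, that the two halves can be absorbed into the cliques $A\sm B$ and $B\sm A$, or else retained as a bona fide third clique, without ever creating a pair at distance greater than $1$. Turning this into a clean case analysis is precisely the geometric work to be deferred, and it is where the real difficulty of the lemma lies.

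Finally I would secure the cardinality condition. Given a partition into three cliques $C_1,C_2,C_3$, if two already differ in size we are done; otherwise all three share a common size $s$, and if $s=0$ the graph is empty and the statement is vacuous. For $s\ge 1$ I would move a single vertex $x$ between cliques: if some $x$ can be added to a neighbouring clique while keeping it a clique, the sizes become $s+1,s,s-1$ and we are finished. If no such move exists then each candidate $x$ has a non-neighbour in each of the other two cliques, a situation I would rule out (or circumvent by re-choosing the diameter pair to break the symmetry). This last point is a minor technical addition compared with the geometric core of the lemma.
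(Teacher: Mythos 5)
Your opening moves are correct and, as far as they go, coincide with the paper's: fixing a farthest pair $u,v$, observing that stability two makes $N(u)\sm N(v)$ and $N(v)\sm N(u)$ cliques and forces every vertex into $N[u]\cup N[v]$, and settling the case $\dist(u,v)\geq\sqrt 3$ by the lens-width computation is exactly Lemma~\ref{commonnbh} together with the first case in the paper's proof of Lemma~\ref{xkey}. Your cardinality endgame is also fine in that case, and for the reason the paper exploits: $u$ is complete to $N(u)\cap N(v)$ by definition, so two of the three cliques can be taken to share the vertex $u$, and deleting $u$ from one of them breaks any tie in sizes --- this is precisely how the paper derives Lemma~\ref{cliques} from the stronger Lemma~\ref{xkey}.

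The gap is the regime $1<D<\sqrt 3$, which you explicitly defer; this is not a technical loose end but the entire content of the lemma, and the paper spends all of Section~\ref{keysec} on it. Worse, the plan you sketch for it does not survive scrutiny. Cutting the lens $N(u)\cap N(v)$ by the line $uv$ does yield two cliques (each half has diameter $\sqrt{2-D}<1$), but absorbing a half into $N(u)\sm N(v)$ or $N(v)\sm N(u)$ is not forced by stability two: a vertex $a$ with $\dist(a,u)\leq 1<\dist(a,v)$ need not be adjacent to a lens vertex $w$, since $w$ is adjacent to $v$ and hence no stable triple arises. Concretely, take $u=(0,0)$, $v=(1.2,0)$, $a=(0.1,0.3)$, $w=(0.7,-0.7)$: here $\alpha=2$, $u,v$ is the diameter pair, $a\in N(u)\sm N(v)$, $w$ lies in the lower half of the lens, and $\dist(a,w)>1$, so that half cannot join the $u$-side clique; symmetric configurations block the other side, and a lens vertex can even be non-adjacent to vertices on both sides at once (they need only be adjacent to each other). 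Keeping the whole lens as a third clique is exactly what fails when $D<\sqrt 3$. So the metric facts you list (the $60^\circ$ angle, distances bounded by $D$, the lens tips) do not by themselves produce the partition. The paper's route through this regime is different in kind: Lemma~\ref{distordisk} shows that when all pairwise distances are below $\sqrt 3$ the \emph{whole graph} fits inside a single unit disk; its centre is then adjoined as a universal vertex, Lemma~\ref{convcase} partitions the vertices in convex position into three cliques using the crossing lemma (Lemma~\ref{crossing}) and Lemma~\ref{3consec}, and the interior vertices are then distributed among these cliques, with two exceptional zones $T^+$ and $T^-$ handled by the four-case analysis in Lemma~\ref{3cliques}. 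None of that machinery appears in your proposal, so what you have is a correct treatment of the easy case together with an unproven, and as sketched unworkable, plan for the hard one.
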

Let me remark that the lemma is motivated by the structure of the lower bound example. 
Moreover, as a by-product we obtain that a  stability two unit disk graph has 
clique partition number $\overline\chi\leq 3$.

For the well-known Edmonds-Gallai decomposition, which we state 
below, we refer to Lovasz and Plummer~\cite{LP86}. 
We briefly recall
the basic notions of matching theory.
A \emph{matching} of a graph $G$ is a set of edges so that no two of its edges share 
an endvertex. The matching is \emph{perfect} if every vertex is incident with a matching edge;
and it is \emph{near-perfect} if this is the case for every vertex except one. The graph $G$
is \emph{factor-critical} if $G-v$ has a perfect matching for every vertex $v$.

\begin{theorem}[Edmonds-Gallai decomposition]
For any graph $G$ denote by $A$ the set of those vertices~$v$ for which there exists
a maximum-size matching missing~$v$. Let $X:=(\bigcup_{a\in A}N(a))\sm A$, 
and $B:=V(G)\sm (A\cup X)$.
Then
\begin{itemize}
\item every odd component of $G-X$ is factor-critical and contained in $A$;
\item every even component of $G-X$ has a perfect matching and is contained in $B$; and
\item every non-empty subset $X'$ of $X$ has neighbours in more than $|X'|$ odd components of $G-X$.
\end{itemize}
\end{theorem}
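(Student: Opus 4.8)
The statement is the classical Gallai--Edmonds structure theorem, and the plan is to recover the three sets $A$, $X$, $B$ from the alternating paths of a single maximum matching. Fix a maximum matching $M$ and let $U$ be the set of vertices it misses. Call a vertex $v$ \emph{outer} if $U$ can be joined to $v$ by an $M$-alternating path of even length (such a path necessarily begins with a non-matching edge, and each vertex of $U$ is outer via the trivial path), call $v$ \emph{inner} if it is not outer but is reachable by an odd-length alternating path, and call $v$ \emph{deep} otherwise. The engine behind everything is that $M$, being maximum, admits no augmenting path.

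I would first prove the \emph{identification} outer $=A$, inner $=X$, deep $=B$; note this automatically exhibits the labelling as independent of $M$. If $v$ is outer, interchanging matching and non-matching edges along an even alternating path from $U$ to $v$ produces a matching of the same cardinality that misses $v$, so $v\in A$. Conversely, if a maximum matching $M'$ misses $v$, then every component of $M\triangle M'$ is a path or an even cycle, and the path component ending at $v$ is, by maximality of both $M$ and $M'$, an even $M$-alternating path from a vertex of $U$ to $v$; hence $v$ is outer. An inner vertex is reached through an outer vertex across a non-matching edge, so it lies in $N(A)\sm A=X$ and is not outer; and every $x\in X$ is, by a short alternating argument, inner. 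The remaining deep vertices are then exactly $B$.

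From the identification the first two bullets drop out. No edge of $G$ joins an outer to a deep vertex (a neighbour of an outer vertex is itself outer or inner), so each component of $G-X$ lies entirely in $A$ or entirely in $B$. For a component $K\subseteq B$ the $M$-partner of a deep vertex is again deep, so $M$ restricts to a perfect matching of $K$; in particular $K$ is even. For a component $K\subseteq A$ the matching $M$ restricts to a near-perfect matching, and $K$ is \emph{factor-critical}: every vertex of $K$ is missed by some maximum matching (because $K\subseteq A$), and Gallai's lemma---a connected graph all of whose vertices are missed by some maximum matching is factor-critical---applied inside $K$ yields the claim. Since factor-critical graphs have odd order, these are precisely the odd components of $G-X$.

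The strict expansion in the third bullet is the crux, and I would obtain it from the acyclicity of the alternating forest. Contract each component of $G[A]$ (each outer ``blossom'') to a single node; the alternating paths then organise the inner vertices $X$ and these nodes into a forest $\hat F$ in which inner vertices and contracted components alternate, each inner vertex $x$ being joined to its \emph{predecessor} component and to the component containing its $M$-partner $M(x)$. The key sub-lemma is that these two components are \emph{distinct}: were the predecessor of $x$ and $M(x)$ in the same outer component, one could route an even alternating path to $x$ through that component and conclude that $x$ is outer, contradicting that $x$ is inner. Granting this, fix a non-empty $X'\subseteq X$ and let $\mathcal K$ be the set of components of $G[A]$ adjacent in $\hat F$ to some vertex of $X'$; each such component is a genuine $G$-neighbour of $X'$. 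The $2|X'|$ edges of $\hat F$ incident with $X'$ (a predecessor edge and a partner edge per $x\in X'$, all distinct) form a forest on the vertex set $X'\cup\mathcal K$, whence $2|X'|\le |X'|+|\mathcal K|-1$ and therefore $|\mathcal K|\ge |X'|+1$. Thus $X'$ has neighbours in more than $|X'|$ odd components. The main obstacle is the foundational blossom bookkeeping underpinning all of this: proving that the outer/inner/deep labelling is internally consistent, and establishing the predecessor-distinct-from-partner sub-lemma; once these are in place, the three bullets follow as above.
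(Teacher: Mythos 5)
The paper itself offers no proof of this statement: it is quoted as a classical theorem with an explicit pointer to Lov\'asz and Plummer, so your attempt can only be judged against the standard textbook arguments, and your plan is indeed the standard one, namely the analysis of $M$-alternating paths underlying Edmonds' algorithm. The identification outer $=A$, inner $=X$, deep $=B$ is argued correctly (the $M\triangle M'$ argument and the extension arguments all go through, with the usual care about prefixes of paths), and so are the facts that each component of $G-X$ lies wholly in $A$ or wholly in $B$ and that $M$ matches deep vertices to deep vertices. But two steps that you treat as routine are genuine gaps, and they are exactly where the textbook proofs spend their effort. First, Gallai's lemma is misapplied: its hypothesis is that every vertex of $K$ is missed by some maximum matching \emph{of $K$}, whereas what you have established is that every vertex of $K$ is missed by some maximum matching \emph{of $G$}. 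A maximum matching of $G$ missing $v\in K$ need not restrict to a maximum matching of $K$ --- vertices of $K$ may be matched into $X$ --- and your parallel assertion that $M$ restricts to a near-perfect matching of $K$ (equivalently, that at most one vertex of $K$ is exposed or matched into $X$) is stated without proof. Bridging this transfer is precisely the content of Lov\'asz's stability lemma, or, in other treatments, of showing that $X$ is a barrier attaining the Berge--Tutte deficiency; it cannot be waved through.

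Second, the count $2|X'|\le |X'|+|\mathcal K|-1$ requires $\hat F$ to be \emph{acyclic}, but your predecessor-distinct-from-partner sub-lemma only excludes the two $\hat F$-edges at a single inner vertex $x$ ending at the same contracted component, i.e.\ cycles of length two after contraction. Longer cycles, alternating partner and predecessor edges through several distinct contracted components, are not touched by it, so your closing claim that once the labelling consistency and the sub-lemma are in place ``the three bullets follow'' is not accurate: the forest property is an additional substantive claim. It is normally obtained from the genuine tree structure of the alternating forest grown by the blossom algorithm (an edge between outer vertices in different trees yields an augmenting path contradicting maximality; an edge between outer vertices in one tree yields a blossom, and one must then verify that contracting each component of $G[A]$ --- which need not be connected \emph{within} a tree --- creates no cycles), or else the third bullet is derived by deficiency counting from the barrier property, bypassing the forest altogether. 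In short, your sketch correctly reproduces the skeleton of the standard proof, but the two items you defer as ``foundational blossom bookkeeping'' --- the maximality transfer into the odd components and the acyclicity after contraction --- are the mathematical core of the theorem rather than routine verification, so as it stands the proposal is an outline with its hardest steps unproven.
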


The last conclusion, in particular, means that, by Hall's marriage theorem,
there is a matching in which every edge  has one endvertex in $X$
and the other in an odd component of $G-X$, but in which no odd component 
is incident with two edges of the matching.

\begin{proof}[Proof of Theorem~\ref{mainthm}]
Let $H$ be the complement of $G$.
By the 
Edmonds-Gallai decomposition, we find a matching $M$ of $H$, a vertex set $X$ and 
a set $\mathcal O$ of factor-critical components of $H-X$ so that:
$M_R:=M\cap E(R)$ is a perfect matching for $R:=H-X-\bigcup_{K\in\mathcal O}K$;
$M_K:=M\cap E(K)$ is a near-perfect matching for every $K\in\mathcal O$;
and every $x\in X$ is incident with an edge in $M$ whose other endvertex
lies in some $K\in\mathcal O$. Let us denote the set of matching edges
incident with vertices in $X$ by $M_X$, and let $\mathcal O_X$ 
be the set $K\in\mathcal O$ incident with an edge in $M_X$. Finally,
set $\mathcal O':=\mathcal O\sm\mathcal O_X$. Thus
\[
M=M_R\cup M_X\cup\bigcup_{K\in\mathcal O}M_K,\,
\mathcal O=\mathcal O_X\cup\mathcal O'\text{ and }|M_X|=|\mathcal O_X|.
\]

By Lemma~\ref{cliques}, $R$ can be partitioned into 
three stable sets $A_R,B_R,C_R$, which we may choose so that 
$|A_R|\geq\max(|B_R|,|C_R|)$. Moreover, by the same lemma, every $K\in\mathcal O$
can be partitioned into three stable sets $A_K,B_K,C_K$
so that the three sets do not have the same size. Choosing 
them with  $|A_K|\geq |B_K| \geq |C_K|$ implies
\begin{equation}\label{sizeofC}
|C_K|+1 \leq |A_K|.
\end{equation}
Observe that $A:=A_R\cup\bigcup_{K\in\mathcal O}A_K$
is a stable set of $H$.

Now, we see that
\begin{equation*}
2|M_R|=|V(R)|=|A_R|+|B_R|+|C_R|\leq 3|A_R|,
\end{equation*}
while for every $K\in\mathcal O$ we obtain with~\eqref{sizeofC}
\begin{equation*}
2|M_K|=|V(K)|-1=|A_K|+|B_K|+|C_K|-1\leq 3|A_K|-2.
\end{equation*}
From this it follows that
\begin{eqnarray*}
2|M|&=&2|M_R|+2|M_X|+2\sum_{K\in\mathcal O}|M_K|\\
&\leq&  3|A_R|+ 2|M_X|+\sum_{K\in\mathcal O}(3|A_K|-2)\\
& = & 3|A| + 2|M_X| - 2|\mathcal O| \quad=\quad  3|A| - 2|\mathcal O'|.
\end{eqnarray*}
The matching $M$ together with the set of unmatched vertices, one for each $K\in\mathcal O'$, yields
a clique partition of $H$  of size $|M|+|\mathcal O'|$.
Hence
\[
2\overline \chi(H)\leq 2(|M|+|\mathcal O'|)\leq 3|A| - 2|\mathcal O'| +
2|\mathcal O'| \leq 3\alpha(H). 
\] 
We deduce $\chi(G)\leq\frac{3}{2}\omega(G)$, which finishes the proof.
\end{proof}

Observe that the proof of the theorem does not appeal at all to unit disk graphs. 
In fact, we implicitly show the following result for general graphs:

\begin{lemma}
If the vertex set of a graph $G$ can be partitioned
into $k\geq 2$ cliques not all which have the same cardinality then
$\chi(G)\leq \frac{k}{2}\omega(G)$.
\end{lemma}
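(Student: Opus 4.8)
The plan is to re-run the proof of Theorem~\ref{mainthm} almost verbatim, with the constant $3$ replaced by $k$ and with the two appeals to Lemma~\ref{cliques} replaced by the partition we are handed. Passing to the complement $H:=\overline G$, the goal becomes $\overline\chi(H)\le\frac k2\alpha(H)$, and the hypothesis supplies a partition of $V(H)$ into $k$ independent sets $Q_1,\dots,Q_k$ whose sizes are not all equal. I would apply the Edmonds--Gallai decomposition to $H$ exactly as in the theorem, producing the set $X$, the even part $R$, the family $\mathcal O$ of odd factor-critical components, the matching $M=M_R\cup M_X\cup\bigcup_{K}M_K$ with $|M_X|=|\mathcal O_X|$, and the associated clique partition of $H$ of size $|M|+|\mathcal O'|$.

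For each piece---the even part $R$ and every component $K\in\mathcal O$---I would restrict $Q_1,\dots,Q_k$ to its vertex set, obtaining a partition into at most $k$ independent sets, and let $A_R$ (respectively $A_K$) be a largest of these. Since distinct components of $H-X$ send no edge to one another, $A:=A_R\cup\bigcup_{K}A_K$ is again independent in $H$, so $|A|\le\alpha(H)$. The two size estimates should then read $2|M_R|=|V(R)|\le k|A_R|$ for the even part and, for every odd component, $2|M_K|=|V(K)|-1\le k|A_K|-2$. Summing over all pieces and using $|M_X|=|\mathcal O_X|$ together with $|\mathcal O|=|\mathcal O_X|+|\mathcal O'|$ yields $2|M|\le k|A|-2|\mathcal O'|$, and hence $2\overline\chi(H)\le 2(|M|+|\mathcal O'|)\le k|A|\le k\alpha(H)$, which is exactly the claim.

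The step that is not automatic, and which I expect to be the main obstacle, is the odd-component estimate $2|M_K|\le k|A_K|-2$. As with~\eqref{sizeofC}, the extra $-2$ (rather than a mere $-1$) is available only when the restricted parts on $V(K)$ are \emph{not} all of the same size, whereas the hypothesis guarantees merely that the \emph{whole} partition is unbalanced; restricting to a single odd component may well produce $k$ equal parts. For even $k$ this difficulty disappears, since equal parts would force $k\mid|V(K)|$, impossible for the odd number $|V(K)|$, so every odd component is automatically unbalanced and the argument goes through unchanged. The remaining case is odd $k$ with a component $K$ on which the restriction is balanced. There I would re-choose the partition of $V(K)$: whenever $H[V(K)]$ is colourable with fewer than $k$ colours, padding with empty classes makes it unbalanced, so the only residual configuration is that $H[V(K)]$ genuinely requires all $k$ colours in a balanced fashion. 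That last configuration is the real crux, where the plain matching cover is too weak and one must instead exploit the larger cliques of $H$ available in such a component, covering $V(K)$ with fewer cliques than the matching would use. Pinning down this final case is where I expect the genuine work to lie.
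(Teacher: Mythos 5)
Your reconstruction of the paper's counting is faithful---the Edmonds-Gallai setup, the identity $|M_X|=|\mathcal O_X|$, the cover of size $|M|+|\mathcal O'|$, and the two per-piece estimates are exactly what the proof of Theorem~\ref{mainthm} uses---and your diagnosis of where it creaks is correct: all the argument needs from the hypothesis is, for each factor-critical component $K$ of $H-X$, a stable set of $H[V(K)]$ of size at least $(|V(K)|+1)/k$, and a single global unbalanced partition does not supply this when $k$ is odd and $k$ divides $|V(K)|$. Your parity fix for even $k$ and the reduction of odd $k$ to components with $\alpha(H[V(K)])=|V(K)|/k$ are both sound (for even $k$ one does not even need Edmonds-Gallai: pairing the $k$ cliques of $G$ and using that complements of bipartite graphs are perfect already gives $\chi(G)\le\frac k2\omega(G)$, no unbalancedness required). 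But then the proposal stops, and what it stops short of is the entire substance of the statement for odd $k$. Concretely, in the residual case the count is short by exactly one clique: one must cover such a $K$ with at most $(|V(K)|-1)/2$ cliques, one fewer than the near-perfect matching provides, e.g.\ by finding a triangle $T$ of $K$ so that $K-T$ still has a perfect matching. No counting argument can conjure this; indeed, already for $k=3$, if such a component could be chosen \emph{triangle-free} (an extremal question about tripartite triangle-free graphs which the proposal does not touch), then that component together with a disjoint $C_5$ would satisfy the lemma's hypothesis yet violate the bound, since in a triangle-free graph no clique cover beats the matching cover. So the unresolved case is a genuine gap, not housekeeping.

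You should also know, however, that you have not overlooked an argument the paper contains; you have put your finger on an imprecision in the paper itself. The paper's entire ``proof'' of this lemma is the remark preceding it, that the proof of Theorem~\ref{mainthm} never appeals to unit disk graphs. But that proof invokes Lemma~\ref{cliques} separately for $R$ and for \emph{every} component $K\in\mathcal O$; this is legitimate there because stability-two unit disk graphs form a hereditary class, so each Gallai-Edmonds piece of the complement again satisfies the hypothesis of Lemma~\ref{cliques}. The general lemma as printed hypothesizes only one partition of $V(G)$, which, as you observe, need not restrict to unbalanced partitions of the pieces when $k$ is odd. Under the reading the paper evidently intends---the partition hypothesis available hereditarily, i.e.\ for $R$ and for each $K$, unbalanced on the odd-order pieces---your re-run closes verbatim and none of your case analysis is needed. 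Under the literal reading, the paper offers no proof at all, and the residual case you could not settle is precisely what would have to be proved.
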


\section{Basic geometric facts}\label{geom}

Before beginning with the proof of the key lemma, let me 
collect in this section the basic geometric facts that we will need.

The geometry of \udg s is not linear. For example, the subset  
of points of distance~$\leq 1$ to a given set of vertices,
which is the intersection of several unit disks, can be 
very complex indeed. 
Sometimes this inherent non-linearity can be avoided. That is, instead
of exploiting a concrete realisation of the unit disk graph, 
it is sometimes possible to deduce the desired conclusion only by appealing to 
abstract properties shared by all \udg s. If this is possible 
it might even result in cleaner arguments. The
\textsc{MaxClique} algorithm by Raghavan and Spinrad~\cite{RSp03},
for instance, is such an example. 

Abstract properties of \udg s include the fact that a \udg\ may not contain
any induced $K_{1,6}$ or that 
\begin{equation}\label{nbhprop}
\begin{minipage}[c]{0.8\textwidth}\em
the common neighbourhood of any two 
non-adjacent vertices induces a co-bipartite graph.
\end{minipage}\ignorespacesafterend 
\end{equation}
A similar property may be found in~\cite{CCJ90}.

Although~\eqref{nbhprop} is a fairly powerful property it is even 
in conjunction with stability $\alpha=2$ 
not enough to guarantee a chromatic number of 
$\chi\leq \frac{3}{2}\omega$. 
\newcommand{\cs}[1]{\ensuremath{\text{\rm CS}_#1}}
To see this, consider the following graph \cs k, which is a subgraph of a graph 
appearing in Chudnovsky and Seymour~\cite{ChudSey}.
Let \cs k be defined on four disjoint cliques each of which 
is comprised of $k$ vertices: $\{a_1,\ldots, a_k\}$, $\{b_1,\ldots, b_k\}$,
$\{c_1,\ldots,c_k\}$ and $\{d_1,\ldots,d_k\}$. 
Additionally, for $i,j=1,\ldots,k$ with $i\neq j$ we define the following adjacencies:
Let $a_i$ be adjacent with $b_j$ and $d_j$ and with $c_i$;
let $b_i$ be adjacent with $c_j$ and $a_j$ and with $d_i$;
let $c_i$ be adjacent with $d_j$ and $b_j$ and with $a_i$; and
let $d_i$ be adjacent with $a_j$ and $c_j$ and with $b_i$.
All other pairs of vertices are non-adjacent. 

Clearly, the stability of \cs k is equal to~$2$, and
if $k\geq 3$ then $\omega(\cs k)=k+1$ and $\chi(\cs k)=2k$.
It  is not entirely obvious but also 
not overly difficult to check that \cs k satisfies~\eqref{nbhprop}.

To sum up, directly exploiting the geometry of a \udg\
might be hard due to the inherent non-linearity, while the other approach
of using only abstract properties appears to fail. 
So, what can be done? As always, there is a sensible 
middle ground. We will work with a concrete geometric realisation,
that is, the vertices will have concrete positions in the plane,
but we will in some sense linearise the adjacencies:
To show that two given vertices are adjacent we will 
never try to calculate their distance directly but rather 
use the following two principles,  Lemmas~\ref{crossing} and~\ref{elemtrig},
that are of a more combinatorial flavour.

Let us say that for distinct vertices $u,v,x,y$ in a unit disk graph,
the two edges $uv$ and $xy$ 
are \emph{crossing} if 
$\conv(u,v)$ intersects $\conv(x,y)$.

\begin{lemma}[Breu~\cite{BreuPHD}]\label{crossing}
Let $u,v,x,y$  be four distinct vertices in a unit disk graph~$G$. If $uv$ and $xy$
are crossing edges then $G[u,v,x,y]$ contains a triangle.
\end{lemma}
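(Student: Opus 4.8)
The plan is to reduce the geometric statement to the triangle inequality together with a small combinatorial observation about $4$-cycles. Let $p$ be a point lying in both $\conv(u,v)$ and $\conv(x,y)$, which exists because the edges cross. Since $p$ lies on the segment from $u$ to $v$ we have $\dist(u,p)+\dist(p,v)=\dist(u,v)\le 1$, and likewise $\dist(x,p)+\dist(p,y)=\dist(x,y)\le 1$. Write $a:=\dist(u,p)$, $b:=\dist(v,p)$, $c:=\dist(x,p)$, $d:=\dist(y,p)$, so that $a+b\le 1$ and $c+d\le 1$; this is the only place where the crossing and the hypothesis that $uv$ and $xy$ are edges get used.

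Next I would set up the following reduction. Since $uv$ and $xy$ are already edges, any triangle on $\{u,v,x,y\}$ must use one of them, so it suffices to find a single vertex adjacent to both endpoints of the opposite edge: if, say, $x$ is within distance $1$ of both $u$ and $v$, then $\{u,v,x\}$ is a triangle. I arrange the four \emph{potential} edges $ux,xv,vy,yu$ as a $4$-cycle $u\text{--}x\text{--}v\text{--}y\text{--}u$. A triangle exists precisely when some vertex of this cycle has both of its incident potential edges realised, that is, of length at most $1$.

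Now suppose for contradiction that no such vertex exists. Then every vertex of the $4$-cycle is incident with a potential edge of length strictly greater than $1$, so these long edges form an edge cover of the cycle. An edge cover of a $4$-cycle necessarily contains one of its two perfect matchings, i.e.\ a pair of \emph{opposite} edges; hence either $\dist(u,x)>1$ and $\dist(v,y)>1$, or $\dist(x,v)>1$ and $\dist(y,u)>1$. In the first case the triangle inequality through $p$ gives $\dist(u,x)\le a+c$ and $\dist(v,y)\le b+d$, so
\[
2<\dist(u,x)+\dist(v,y)\le (a+c)+(b+d)=(a+b)+(c+d)\le 2,
\]
a contradiction; the second case is identical with the roles of the two pairs exchanged. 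This contradiction produces the desired triangle.

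I expect the only real subtlety to be the combinatorial step: a single short cross-edge is not enough to build a triangle, and one has to observe that avoiding a triangle forces the long edges to cover all four vertices and hence to contain a whole opposite pair — it is exactly this pairing that lets the two length budgets $a+b\le 1$ and $c+d\le 1$ collide. Finally I would note that the computation never assumes that the four points are in convex position or even distinct from $p$; if $p$ happens to be an endpoint, say $p=u$ so that $u$ lies on $\conv(x,y)$, then $\dist(u,x),\dist(u,y)\le\dist(x,y)\le 1$ already yield the triangle $\{u,x,y\}$, and so no separate treatment of degenerate or collinear configurations is needed.
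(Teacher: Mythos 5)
Your proof is correct. Note that the paper itself gives no proof of this lemma at all: it is quoted from Breu's PhD thesis, so there is no in-paper argument to compare against, and a self-contained proof like yours is exactly what is missing. Every step checks out: the crossing point $p$ gives $a+b=\dist(u,v)\le 1$ and $c+d=\dist(x,y)\le 1$; a triangle on $\{u,v,x,y\}$ exists precisely when some vertex of the potential $4$-cycle $u\text{--}x\text{--}v\text{--}y\text{--}u$ has both incident potential edges of length at most $1$; and an edge cover of a $4$-cycle must contain a pair of opposite edges, whose combined length the triangle inequality through $p$ bounds by $(a+b)+(c+d)\le 2$, contradicting that both exceed $1$. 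One small remark: you can avoid the contradiction and the edge-cover observation entirely by running the same inequality positively. From $\dist(u,x)+\dist(v,y)\le (a+c)+(b+d)\le 2$ at least one of $ux,vy$ has length at most $1$, and symmetrically at least one of $xv,yu$ does; since any edge chosen from the first pair shares a vertex with any edge chosen from the second pair, those two short edges together with $uv$ or $xy$ always close a triangle. This is the same estimate you use, just organised so that no case analysis of the cover is needed. Your closing remark about degenerate positions is also accurate, and in fact superfluous: the computation never uses that $p$ is distinct from the four vertices, so the special case $p=u$ is already subsumed.
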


\begin{lemma}\label{elemtrig}
Let a vertex $v$ of a unit disk graph be adjacent to 
two vertices $u$ and $w$. Then $v$ is adjacent to $x$ for 
every vertex $x\in\conv(u,v,w)$.
\end{lemma}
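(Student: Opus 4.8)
The plan is to reduce this to a single convexity observation: a closed disk is convex, so once it contains the three vertices $u,v,w$ it automatically contains their entire convex hull. The point is that adjacency in the distance model means nothing more than distance at most one, so I never have to compute $\dist(v,x)$ directly for the interior point $x$.

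First I would set $r:=\max\bigl(\dist(v,u),\dist(v,w)\bigr)$. Since $v$ is adjacent to both $u$ and $w$, the distance model gives $\dist(v,u)\leq 1$ and $\dist(v,w)\leq 1$, and hence $r\leq 1$. Next I would consider the closed disk $D$ of radius $r$ centred at $v$. As a closed disk, $D$ is convex, and it plainly contains all three of $u,v,w$: indeed $\dist(v,u)\leq r$, $\dist(v,w)\leq r$, and $\dist(v,v)=0\leq r$.

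Because $D$ is convex and contains the three points $u,v,w$, it must contain the smallest convex set containing them, namely $\conv(u,v,w)$. Consequently every point $x\in\conv(u,v,w)$ lies in $D$, which is to say $\dist(v,x)\leq r\leq 1$. In particular, any vertex $x$ of the graph that happens to lie in $\conv(u,v,w)$ is at distance at most one from $v$, and therefore $v$ is adjacent to $x$ by the definition of the distance model.

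There is, frankly, no real obstacle here: the statement is purely a consequence of the convexity of disks, which encodes the fact that the distance function to a fixed point attains its maximum over a polytope at one of the vertices. The only things to keep in mind are the paper's convention that vertices are genuine points of $\mathbb R^2$ and that two vertices are adjacent precisely when their distance does not exceed one. An equivalent phrasing I might use instead is that the function $\dist(v,\cdot)$ is convex, so its maximum on the triangle $\conv(u,v,w)$ is attained at one of the corners $u,v,w$ and thus equals $\max\bigl(\dist(v,u),\dist(v,w)\bigr)\leq 1$.
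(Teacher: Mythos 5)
Your proof is correct and is essentially the paper's own argument: the paper also observes that the (unit) disk centred at $v$ is convex, contains $u,v,w$, and hence contains $\conv(u,v,w)$, so every vertex therein is adjacent to $v$. Using a disk of radius $r=\max\bigl(\dist(v,u),\dist(v,w)\bigr)\leq 1$ instead of radius $1$ is an immaterial variation.
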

\begin{proof}
The vertex $v$ is adjacent to every vertex in the unit disk
centered at~$v$. This disk clearly contains $\conv(u,v,w)$.
\end{proof}

Having stated that we will work only with these two principles
rather than with concrete distances, let me turn around and immediately
violate that rule.
This becomes necessary as  we will
need to distinguish two classes of unit disk graphs:
Those that have two vertices that are far apart 
and those that fit into a small disk.
This will be done in the next two lemmas---from then on, however, 
we will adhere to the rule.

\begin{lemma}\label{distordisk}
Let $G$ be a unit disk graph. Then either $G$ has two
vertices of distance greater than $\sqrt 3$ or 
there is a unit disk that contains all of $G$.
\end{lemma}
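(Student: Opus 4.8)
The plan is to prove the contrapositive. Assuming that $\dist(u,v)\le\sqrt 3$ holds for all vertices $u,v$ of $G$, I would exhibit a single unit disk containing $V(G)$. The natural object to consider is the \emph{smallest enclosing disk} $D$ of the point set $V(G)$, say with centre $c$ and radius $r$. It then suffices to show $r\le 1$: the closed disk of radius~$1$ centred at $c$ contains $D$, and hence all of $G$.

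Next I would invoke the classical characterisation of the smallest enclosing disk of a finite planar point set: its boundary circle either passes through two vertices that are antipodal on $\partial D$ (so that the segment joining them is a diameter of $D$), or it passes through three vertices $p,q,s$ with $c\in\conv(p,q,s)$. In the first case we are immediately done: the two antipodal vertices are at distance $2r\le\sqrt 3$, so $r\le\frac{\sqrt 3}{2}\le 1$.

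The substantive case is the second one. Here $D$ is the circumscribed disk of the triangle $pqs$, and since its centre lies in $\conv(p,q,s)$ the triangle has no obtuse angle. As the three angles sum to $\pi$, the largest of them, call it $\theta$, lies in $[\frac\pi3,\frac\pi2]$, and because $\sin$ is increasing on that range we get $\sin\theta\ge\sin\frac\pi3=\frac{\sqrt 3}{2}$. By the law of sines the side of $pqs$ opposite $\theta$ has length $2r\sin\theta\ge r\sqrt 3$; being a distance between two vertices it is at most $\sqrt 3$, whence $r\sqrt 3\le\sqrt 3$ and therefore $r\le 1$, as required. (Equality holds only for the equilateral triangle of side $\sqrt 3$, whose circumradius is exactly $1$, matching the classical bound of Jung.) I expect the only delicate point to be the characterisation step: one must argue cleanly that, by minimality of $D$, the contact vertices on $\partial D$ cannot all lie strictly to one side of a line through $c$, since otherwise $D$ could be translated towards them and shrunk; this is precisely what forces either the antipodal pair or the enclosing triangle. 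Once that is in place, the remaining trigonometry is routine.
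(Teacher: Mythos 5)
Your proposal is correct and takes essentially the same approach as the paper: both run the planar Jung's theorem argument on a minimal enclosing disk, splitting into the case of two antipodal boundary vertices and the case of three boundary vertices whose convex hull contains the centre. The paper establishes your ``delicate point'' (that the centre lies in the convex hull of the contact vertices) by precisely the translate-and-shrink perturbation you sketch, and its trigonometric step---a central angle of at least $\frac{2}{3}\pi$ in an isoceles triangle---is the inscribed-angle twin of your law-of-sines computation.
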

\begin{proof}
We assume that all pairs of vertices of $G$ have distance at most $\sqrt 3$.
\comment{
We consider the continuous function 
\[
f:\mathbb R^2\to \mathbb R,\quad
x\mapsto \max\{\dist(x,v) : v\in V(G)\},
\]
and claim that 
\begin{equation}\label{globalmin}
\emtext{$f$ has a global minimum}.
\end{equation}
Indeed, choose $m>0$ as the greatest distance between
any two vertices of $G$.  Let $D$
be a closed disk containing $\conv (G)$ so that every point outside $D$ has
at least distance $2m$ to every vertex of $G$. 
As $D$ is compact and $f$ continuous, 
$f$ attains a minimum value~$d$ in $D$. Clearly, $d\leq m$. 
On the other hand, we have $f(p)\geq 2m$ for every point $p$ outside $D$.
Thus $d$ is a global minimum.
}%
It is straightforward to see that there is a point $x$ whose maximal
distance to $V(G)$ is minimal. More formally, there is an $x\in\mathbb R^2$
so that setting $d:=\max\{\dist(x,v) : v\in V(G)\}$
we obtain $\dist(y,v)\geq d$ for all points $y\in\mathbb R^2$ and all vertices $v$.
\comment{
Denote by $x$ a point in $\mathbb  R^2$ that attains the global minimum $d$,
and let $W$ be the set of vertices $v$ with $\dist(x,v)=d$.
}

Let $W$ be the set of vertices $v$ with $\dist(x,v)=d$.
We will see that $d\leq 1$, which means that all of $G$ is contained in 
the unit disk with centre~$x$.
For this, we claim that
\begin{equation}\label{xinside}
x\in\conv (W).
\end{equation}
Suppose, $x\notin\conv(W)$. Then there are two vertices $w,w'$ in $W$, so that the line $L$
through $w$ and $w'$ separates $x$ from $W$, and so that $x\notin L$.  Choose $\epsilon>0$
small enough so that $\dist(v,x)+\epsilon<d$ for all $v\in V(G)\sm W$.
On the line through $x$ that is orthogonal to $L$, let $x'$ be the point between $x$ and $\conv (W)$
of distance~$\epsilon$ to $x$. To see that $f(x')<f(x)$ 
consider any $u\in W$, which then lies
on the segment between $w$ and $w'$ on a circle of radius~$d$ and centre~$x$.
The angle at $x'$ in the triangle with vertices $u,x',x$
is at least $\pi/2$, which means 
that $\dist(u,x')<\dist(u,x)$. On the other hand, for any $v\in V(G)\sm W$ we also have
$\dist(x',v)<d$ by choice of $\epsilon$. Therefore, $x'$ contradicts that $f$ takes a minimum at $x$.

Now, if $x$ is the convex combination of two vertices $w_1$ and $w_2$ in $W$ then
$2d=\dist(w_1,w_2)\leq \sqrt 3$, which implies $d<1$. If this is not the case, 
then, by Carath\'eodory's theorem, $x$ lies in the convex hull of three vertices $w_1,w_2,w_3$
of $W$. There are $i,j$ with $1\leq i<j\leq 3$ so that the angle $\alpha$
at $x$ in the triangle with vertices $x,w_i,w_j$ is
at least $\frac{2}{3}\pi$ but no more than $\pi$ by~\eqref{xinside}. 
Then we get
\[
\frac{\sqrt 3}{2}=\sin\left(\frac{\pi}{3}\right)\leq\sin\left(\frac{\alpha}{2}\right)=\frac{\dist(w_i,w_j)}{2d}
\]
Thus, $d\leq 1$ as $\dist(w_i,w_j)\leq\sqrt 3$.
\end{proof}

The case when  a unit disk graph has two vertices $u,v$
of distance~$\geq\sqrt 3$ is particularly easy. 
We will see that in this case we can get the key lemma
with only a small effort.

\begin{lemma}\label{commonnbh}
Let $G$ be a unit disk graph. If $u$ and $v$ 
are two vertices of distance at least $\sqrt{3}$ then
$N(u)\cap N(v)$ is a clique.
\end{lemma}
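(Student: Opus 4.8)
The plan is to translate the statement entirely into a question about a single planar region. Writing $D_u$ and $D_v$ for the closed unit disks centred at $u$ and at $v$, every vertex of $N(u)\cap N(v)$ lies in the lens $L:=D_u\cap D_v$. Thus it suffices to show that any two points of $L$ are at distance at most~$1$, that is, that $L$ has diameter at most~$1$; then any two vertices in $N(u)\cap N(v)$ are adjacent and the common neighbourhood is a clique. This is exactly the kind of direct distance computation the preceding discussion warned we would occasionally need.

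I would set up coordinates so that $u$ and $v$ are symmetric about the origin on the first axis, say $u=(-d/2,0)$ and $v=(d/2,0)$ with $d:=\dist(u,v)\ge\sqrt3$. First I record two structural features of $L$: it is convex, being the intersection of two disks, and it is symmetric under the point reflection through the midpoint $M=(0,0)$ of $uv$, since that reflection swaps $u$ and $v$ and hence swaps $D_u$ and $D_v$. The point of the central symmetry is that it reduces the diameter to a single one-variable quantity: if $p,q\in L$ then $\tfrac12(p-q)=\tfrac12\bigl(p+(-q)\bigr)\in L$ by convexity and symmetry, so $\dist(p,q)\le 2\max_{w\in L}\dist(M,w)$, and conversely this bound is attained by an antipodal pair. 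Hence the diameter of $L$ equals $2\max_{w\in L}\dist(M,w)$.

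It remains to compute $\max_{w\in L}\dist(M,w)$. The two bounding circles meet in the two tips of the lens, which a short calculation places at $(0,\pm\sqrt{1-d^2/4})$, at distance $\sqrt{1-d^2/4}$ from $M$. I would then argue that these tips are the points of $L$ farthest from $M$: on the arc of $\partial D_u$ bounding $L$ the squared distance to $M$ works out to $1+\tfrac{d^2}4-d\cos\phi$ for the usual circle parameter $\phi$, which is largest exactly where $\cos\phi$ is smallest, namely at the tips; the arc of $\partial D_v$ is symmetric. Therefore $\max_{w\in L}\dist(M,w)=\sqrt{1-d^2/4}$, and since $d\ge\sqrt3$ this is at most $\tfrac12$. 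Consequently the diameter of $L$ is at most~$1$, which finishes the argument.

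The step I expect to be the main obstacle---really the only place where care is needed---is verifying that the diameter is realised at the tips rather than at the two extreme points of $L$ on the line $uv$. These two candidates compete, and it is precisely the hypothesis $d\ge\sqrt3$ that tips the balance: comparing $1-d^2/4$ with $(1-d/2)^2$ shows the tips win for all $d<2$, so the clean bound $2\sqrt{1-d^2/4}\le1$ governs the whole range. An alternative route would sidestep the geometry by noting that $u,x,v,y$ induce a triangle-free graph and invoking Lemma~\ref{crossing}, but this only settles the subcase where the segment $xy$ crosses $uv$ (equivalently, where $x$ and $y$ lie on opposite sides of the line $uv$); the lens computation has the advantage of handling every case at once.
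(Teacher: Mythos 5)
Your proof is correct and follows essentially the same route as the paper: both arguments reduce the claim to the fact that the diameter of the lens $D_u\cap D_v$ is realised at the two points where the bounding circles meet, and then apply the Pythagorean relation $s^2+d^2=4$ between that diameter $s$ and $d=\dist(u,v)$ (the paper states this contrapositively, deducing $d<\sqrt3$ from $s>1$). The only difference is one of completeness: the paper simply asserts that the lens's farthest pair is the pair of tips, whereas you actually prove it via central symmetry, convexity, and the parametrisation of the boundary arcs.
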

\begin{proof}
Consider any two vertices $u$ and $v$ for which
$N(u)\cap N(v)$ contains two non-adjacent vertices.
The two points of greatest distance in the intersection of 
the unit disk centered at $u$ and the unit disk centered at $v$ 
are  the two points where the 
boundaries of the two unit disks meet. Let this distance be 
$s$; and observe that $s>1$ as otherwise $N(u)\cap N(v)$ would be a clique.
Now, we obtain%
\comment{Pythagoras $\rightarrow$ $s^2/4+dist^2/4=1$}
\[
\dist(u,v)=\sqrt{4-s^2}<\sqrt 3.
\]
\end{proof}

\section{The key lemma}\label{keysec}

In this section we will prove a slightly stronger version of the key lemma:
\begin{lemma}\label{xkey}
Let $G$ be a unit disk graph with $\alpha(G)\leq 2$. 
Then $V(G)$ is the union of   three cliques, two of which 
contain a common vertex.
\end{lemma}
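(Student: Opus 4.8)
The plan is to reduce to two geometric regimes via Lemma~\ref{distordisk} and to rely throughout on one elementary consequence of $\alpha(G)\leq 2$: for any two non-adjacent vertices $x,y$ the sets $N(x)\setminus N(y)$ and $N(y)\setminus N(x)$ are cliques, and more generally the non-neighbourhood of any vertex is a clique. Indeed, two non-adjacent vertices of $N(x)\setminus N(y)$ would, together with $y$, form a stable set of size three. If $\alpha(G)\leq 1$ then $G$ itself is a clique and there is nothing to prove, so I assume there are non-adjacent vertices. Applying Lemma~\ref{distordisk} splits the argument into the case of a far pair (distance $>\sqrt3$) and the case that all of $G$ lies in a single unit disk; in the latter case I may moreover assume that every two vertices are at distance at most $\sqrt3$, since otherwise the far-pair case applies.

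The far-pair case is quick. Let $u,v$ be at distance at least $\sqrt3$; they are non-adjacent, so by $\alpha(G)\leq2$ every other vertex lies in $N(u)\cup N(v)$. By Lemma~\ref{commonnbh} the set $W:=N(u)\cap N(v)$ is a clique, while the observation above makes $N(u)\setminus N(v)$ and $N(v)\setminus N(u)$ cliques. I would then take the three cliques $\{u\}\cup(N(u)\setminus N(v))$, $\{v\}\cup(N(v)\setminus N(u))$ and $\{u\}\cup W$; each is a clique because $u$ (resp.\ $v$) is complete to its neighbourhood, their union is all of $V(G)$, and the first and third share the vertex $u$. This already yields the assertion in this case.

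The substantial case is when $G$ fits in a unit disk and has diameter at most $\sqrt3$. Fix a pair $p,q$ at maximum distance $d$; if $d\leq1$ then $G$ is a clique, so $d\in(1,\sqrt3]$ and $p,q$ are non-adjacent. Again $N(p)\setminus N(q)$ and $N(q)\setminus N(p)$ are cliques, and everything not captured by them or by $p,q$ lies in the lens $W=N(p)\cap N(q)$. Placing $p,q$ symmetrically on a horizontal axis, a short computation shows that the upper half $W^+$ and lower half $W^-$ of the lens (cut by the line through $p$ and $q$) each have diameter $\sqrt{2-d}<1$, hence are cliques; together with $p$ they even extend to cliques $\{p\}\cup W^+$ and $\{p\}\cup W^-$. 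At this point $V(G)$ is covered by four cliques, and the whole difficulty is to save one of them and arrange an overlap.

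This merging step is the main obstacle. One cannot in general simply absorb $W^+$ into $\{p\}\cup(N(p)\setminus N(q))$: a vertex $a\in N(p)\setminus N(q)$ and a lens vertex $w\in W^+$ may fail to be adjacent without creating any stable triple, so the three target cliques must be cut out geometrically rather than read off from $N(p)$ and $N(q)$. My plan is to choose the partition by lines through the lens and to certify that each part is a clique using the two combinatorial principles: Lemma~\ref{elemtrig} propagates adjacency across convex hulls of already-adjacent triples, while Lemma~\ref{crossing} forces a missing edge whenever two established edges cross and the triangle it guarantees can only be completed by that edge --- for instance, if the edges $pa$ and $qw$ cross then, since $p\sim w$ while $p\not\sim q$ and $q\not\sim a$, the forced triangle must use $aw$, giving $a\sim w$. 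Turning such local crossing arguments, controlled by the diameter bound $\sqrt3$ and the position of the farthest pair, into a clean three-clique partition with a shared vertex is the crux of the proof.
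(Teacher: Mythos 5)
Your preliminary reductions and the far-pair case are correct, and that case coincides exactly with the paper's own argument: by $\alpha(G)\leq 2$ every vertex other than $u,v$ lies in $N(u)\cup N(v)$, the sets $N(u)\setminus N(v)$ and $N(v)\setminus N(u)$ are cliques, and $N(u)\cap N(v)$ is a clique by Lemma~\ref{commonnbh}, so the three cliques $\{u\}\cup(N(u)\setminus N(v))$, $\{v\}\cup(N(v)\setminus N(u))$ and $\{u\}\cup(N(u)\cap N(v))$ do the job, the first and third sharing $u$. Your observation that each half of the lens $N(p)\cap N(q)$ has diameter $\sqrt{2-d}<1$ is also correct.

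The genuine gap is the main case, where $G$ lies in a unit disk: there you only exhibit a cover by \emph{four} cliques and then describe a plan for merging them into three cliques with a common vertex, which you yourself call ``the crux of the proof.'' That merging step is not a finishing touch to be filled in by routine crossing arguments --- it is the entire content of the lemma, and the paper spends essentially all of Section~\ref{keysec} on it. Concretely, the paper first proves the statement for \emph{hollow} unit disk graphs, i.e.\ graphs whose vertices all lie on the boundary of the convex hull (Lemma~\ref{convcase}, resting on Lemma~\ref{3consec}); then, in Lemma~\ref{3cliques}, it adds the disk centre as a universal vertex $p$, partitions the boundary vertices into three cliques, cones them to $p$ to obtain five cliques $B^+,B^-,T^+,T^-,R$, and finally redistributes $T^+\cup T^-$ among the other three via the sets $T^{\pm}_B,T^{\pm}_R,T^{\pm}_*$, which requires the delicate analysis of claims~\eqref{TR} and~\eqref{T*}. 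Your own (correct) remark that a vertex of $N(p)\setminus N(q)$ and a lens vertex can be non-adjacent without violating $\alpha\leq 2$ already shows that no partition read off from the farthest pair can work; but the same obstruction afflicts ``cutting the lens by lines'': there is no reason the pieces of such a cut are cliques, and the paper's proof must contend with exactly this difficulty through the exceptional zones $T^+_*,T^-_*$ of interior vertices complete to neither adjacent boundary clique. Moreover, your setup discards the information the paper's proof crucially exploits, namely the cyclic structure of the boundary vertices (the interval notation and Lemma~\ref{convcase}), which is what makes the crossing arguments close up into a global three-clique partition. As it stands, the proposal proves clique cover number at most four in this case, and the passage from four cliques to three is the theorem, not an implementation detail.
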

This implies Lemma~\ref{cliques}:
Let $A,B,C$ be three cliques whose union is $V(G)$, and that
are disjoint except for a vertex $v$ that is contained in $A$ and $B$
but not in $C$. We assume furthermore that $|A|\geq|B|$. 
Then $A,B\sm\{v\},C$ is a clique partition of $V(G)$ in which not all
of the cliques have the same size.

\medbreak

The key lemma holds trivially if $\conv(G)$ is contained in a line.
We assume from now on that this is not the case.

\newcommand{\hollow}{hollow}
\newcommand{\Hollow}{Hollow}

Let us start by considering a special case.
We say that a unit disk graph $G$ is \emph{\hollow}
if no vertex of $G$ lies in the interior of $\conv(G)$.
Thus, all vertices of $G$ appear on the boundary of the polygon $\conv(G)$.
We fix one of the circular orders, say the clockwise order, in which 
the vertices appear on the boundary. We will use the usual 
interval notation for the vertices of a \hollow\ unit disk graph. 
So, for two distinct vertices $u,v$ 
we denote by $[u,v]$ 
the set of all vertices that appear in clockwise order on the boundary
starting with $u$ up to $v$. If $u=v$, we set $[u,v]=\{u\}$. 
We furthermore define $(u,v]:=[u,v]\sm\{u\}$, $[u,v):=[u,v]\sm\{v\}$
and $(u,v):=[u,v]\sm\{u,v\}$.
We say that $u$ and $v$ are \emph{consecutive} if $u\neq v$ and 
either $(u,v)=\emptyset$ or $(v,u)=\emptyset$.

\Hollow\ unit disk graphs have an advantage over general 
unit disk graphs. To decide whether 
two edges $uv$ and $xy$
cross reduces to determining the order of the endvertices
on the boundary: The edges cross if and only if the endvertices
are interleaved, that is, if and only if both $(u,v)$ and $(v,u)$
meet $\{x,y\}$.

\begin{lemma}\label{3consec}
Let $G$ be a \hollow\ unit disk graph. Assume $G$ to have 
vertices $x_1,y_1,x_2,y_2,x_3,y_3$ appearing in this order on the boundary 
of $\conv(G)$ so that $x_i$ and $y_i$ are non-adjacent for $i=1,2,3$.
Then $\alpha(G)\geq 3$.
\end{lemma}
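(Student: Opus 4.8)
The plan is to exhibit a stable set of size three among the six vertices $x_1,y_1,x_2,y_2,x_3,y_3$, using only the combinatorial crossing principle of Lemma~\ref{crossing} together with the fact that in a \hollow\ graph two edges cross exactly when their endvertices interleave on the boundary. So throughout I would assume towards a contradiction that no such stable set exists and hunt for one.

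The first ingredient is the following crossing fact: for any two indices $i\neq j$ the edges $x_ix_j$ and $y_iy_j$ cannot both be present. Indeed, for $i<j$ these four vertices occur on the boundary in the cyclic order $x_i,y_i,x_j,y_j$, so $x_ix_j$ and $y_iy_j$ are the two diagonals of the convex quadrilateral they span and hence cross. If both were edges, Lemma~\ref{crossing} would force a triangle inside $\{x_i,y_i,x_j,y_j\}$; but every three of these four vertices contain one of the non-adjacent pairs $x_iy_i$ or $x_jy_j$, so no triangle can exist. Consequently, for each of the index-pairs $\{1,2\},\{1,3\},\{2,3\}$ at least one of $x_ix_j$, $y_iy_j$ is a non-edge, and I call the pair \emph{$x$-good} or \emph{$y$-good} accordingly. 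By pigeonhole two of the three index-pairs carry the same label, say $x$, and since any two edges of a triangle share a vertex, those two pairs share an index, which after relabelling I may take to be $1$. Thus both $\{1,2\}$ and $\{1,3\}$ are $x$-good, i.e. $x_1x_2$ and $x_1x_3$ are non-edges. If moreover $x_2x_3$ is a non-edge, then $\{x_1,x_2,x_3\}$ is already stable and we are done.

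The remaining case, where $x_2x_3$ is an edge, is the real content. Here I would invoke the crossing principle a second time, now on the quadrilateral spanned by $x_1,x_2,y_2,x_3$, whose boundary order is $x_1,x_2,y_2,x_3$; its two diagonals are precisely $x_1y_2$ and $x_2x_3$, so these chords cross. Since $x_2y_2$ and $x_1x_3$ are non-edges, no three of $x_1,x_2,y_2,x_3$ can form a triangle; as $x_2x_3$ is an edge, Lemma~\ref{crossing} now forces $x_1y_2$ to be a non-edge. But then $x_1x_2$, $x_1y_2$ and $x_2y_2$ are all non-edges, so $\{x_1,x_2,y_2\}$ is the desired stable set and $\alpha(G)\geq 3$.

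The step I expect to be the crux is exactly this last case. The three given non-edges together with the first crossing fact alone do not pin down enough adjacencies — one can write down an abstract graph satisfying all of them yet having stability only two — so it is essential to squeeze out the extra non-edge $x_1y_2$ from a second, carefully chosen crossing and then recycle the cheap same-index non-edge $x_2y_2$. Selecting the right quadrilateral for that second application, namely one whose four vertices admit no triangle while carrying the already-forced edge $x_2x_3$ as a diagonal, is the only delicate point; everything else is bookkeeping with the interleaving criterion for \hollow\ graphs.
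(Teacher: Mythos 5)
Your proof is correct and takes essentially the same approach as the paper: both arguments rely exclusively on Lemma~\ref{crossing} applied to interleaved four-tuples on the boundary, starting from the identical observation that $x_ix_j$ and $y_iy_j$ cannot both be edges. The only difference is organisational --- the paper chains that observation twice to produce consecutive non-edges $x_1s$, $st$, $ty_3$ and then derives a contradiction from the two crossing diagonals $x_1t$ and $sy_3$, whereas you pigeonhole over the three index-pairs and exhibit the stable set ($\{x_1,x_2,x_3\}$ or $\{x_1,x_2,y_2\}$) explicitly.
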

\begin{proof}
If $x_1$ and $x_2$ fail to be adjacent, set $s=x_2$. If $x_1$ and $x_2$ are adjacent then
$y_1y_2\notin E(G)$; otherwise $x_1x_2$ and $y_1y_2$ would be crossing but $G$ does not
contain any triangle on these four vertices, which is
impossible by Lemma~\ref{crossing}. Setting $s=y_1$, we obtain in both cases
that $s\in (x_1,y_2)$ and that $x_1s$ and $sy_2$ are non-edges.
By symmetry, we find a $t\in (s,y_3)$ so that $t$ is not a neighbour of $s$ nor of $y_3$. 
We consider the four vertices $x_1,s,t,y_3$. Unless $\alpha(G)=2$, we have that $x_1$ is adjacent to $t$
and $y_3$ adjacent to $s$. Then, however, $x_1t$ and $y_3s$ are two crossing edges whose 
endvertices do not induce any triangle, which contradicts Lemma~\ref{crossing}.
\end{proof}

We now prove the key lemma for \hollow\ unit
disk graphs.

\begin{lemma}\label{convcase}
Let $G$ be a \hollow\ unit disk graph with $\alpha(G)\leq 2$,
Then for every vertex $v$
there are vertices $v^+$ and $v^-$ so that $[v^-,v]$, $[v,v^+]$ 
and $V(G)\sm [v^-,v^+]$ are cliques.
\end{lemma}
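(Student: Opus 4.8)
The plan is to take for $v^+$ the last vertex in clockwise order for which $[v,v^+]$ is still a clique, and symmetrically for $v^-$ the last vertex reached counterclockwise for which $[v^-,v]$ is a clique. By construction the first two of the three required sets, $[v^-,v]$ and $[v,v^+]$, are cliques, so the whole difficulty is concentrated in the third set $V(G)\sm[v^-,v^+]$, which in interval notation is the far arc $(v^+,v^-)$. If $G$ is itself a clique, or if the two intervals already cover $V(G)$, then $v^\pm$ may be chosen so that $(v^+,v^-)$ is empty and there is nothing to prove; so I assume $(v^+,v^-)\neq\es$ and that $[v^-,v]$ and $[v,v^+]$ meet only in $v$.

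The heart of the matter is to show that $F:=(v^+,v^-)$ is a clique, and the tool for this is Lemma~\ref{3consec}. I first extract two non-edges from the maximality of $v^\pm$. Let $a$ be the first vertex of $F$ in clockwise order; since $v^+$ was last, the interval $[v,a]$ is not a clique, and as $[v,v^+]$ is a clique the only new non-edges involve $a$, so $a$ has a non-neighbour $x_L\in[v,v^+]$. Symmetrically, writing $b$ for the last vertex of $F$, the maximality of $v^-$ yields a non-neighbour $x_R\in[v^-,v]$ of $b$. Thus I have a non-edge $x_La$ straddling the boundary between $[v,v^+]$ and $F$, and a non-edge $bx_R$ straddling the boundary between $F$ and $[v^-,v]$.

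Now suppose, for contradiction, that $F$ is not a clique, and choose non-adjacent $p,q\in F$ with $p$ preceding $q$ clockwise. Since $x_L\in[v,v^+]$, then $a,p,q,b$ lie in $F$ in that clockwise order, and $x_R\in[v^-,v]$, the six vertices $x_L,a,p,q,b,x_R$ occur in this cyclic order on the boundary and carry the three non-edges $x_La$, $pq$ and $bx_R$. This is precisely the interleaving pattern of Lemma~\ref{3consec}, which forces $\alpha(G)\geq 3$ and contradicts $\alpha(G)\leq 2$. Hence $F$ is a clique, and $[v^-,v]$, $[v,v^+]$ and $(v^+,v^-)$ are the three desired cliques.

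The main obstacle is that these six vertices need not be distinct: the witness $a$ may coincide with $p$, the witness $b$ with $q$, or $x_L$ and $x_R$ may both equal $v$ (the case where $v$ itself is the non-neighbour), and when $F$ is small several of these collisions occur at once. In each such degenerate configuration two of the three non-edges share an endpoint, so Lemma~\ref{3consec} does not apply verbatim. I expect to dispatch these cases by hand: for instance, if $v\not\sim a$, $v\not\sim b$ and $a\not\sim b$ then $\{v,a,b\}$ is already a stable set of size three, while if $a\sim b$ one re-selects the witnessing non-neighbours so that the endpoints separate and the interleaving is restored. Checking that some admissible choice always produces six vertices in convex position carrying three disjoint non-edges is the one genuinely case-heavy step; the remainder is just bookkeeping on the clockwise order.
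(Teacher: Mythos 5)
Your construction is exactly the paper's: take the maximal clique intervals $[v,v^+]$ and $[v^-,v]$, extract witnesses of non-adjacency forced by maximality ($x_L$ and $x_R$; the paper calls them $x^+$ and $x^-$, and your $a,b$ are its $y^+,y^-$), pick a non-edge $pq$ in the remaining arc, and feed the six vertices $x_L,a,p,q,b,x_R$ to Lemma~\ref{3consec}. The generic case, and the easy case where the two intervals overlap so that the third set can be taken empty, are handled correctly and identically to the paper.

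The divergence is in the degenerate cases, and there your plan has a real defect. You propose to finish by showing that ``some admissible choice always produces six vertices in convex position carrying three disjoint non-edges,'' i.e.\ by re-selecting witnesses until all six vertices are distinct. That is impossible in general: if the far arc $F$ consists of exactly two non-adjacent vertices, then $p=a$ and $q=b$ are forced, and if moreover $v$ is the only non-neighbour of $a$ in $[v,v^+]$ and of $b$ in $[v^-,v]$, then $x_L=x_R=v$ is forced as well; no re-selection exists. So the case analysis you defer cannot be completed along the route you sketch. What actually closes it---and what the paper implicitly relies on, since its six vertices $x^+,y^+,r,s,y^-,x^-$ can coincide in exactly the same ways---is that Lemma~\ref{3consec} does not need consecutive entries of the cyclic list to be distinct, because its proof tolerates such coincidences: a coincidence merges two of the three non-edges at a common endpoint, which only helps. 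Concretely, if $y_1=x_2$ then $x_1x_2$ is automatically a non-edge and the proof's first branch ($s=x_2$) applies; if $y_2=x_3$ the same happens in the symmetric step; and if $x_1=y_3$ the final four vertices collapse to three, where the non-edges $x_1s$, $st$, $ty_3=tx_1$ already exhibit a stable set of size three. Reading Lemma~\ref{3consec} in this slightly looser way (or re-proving it once with coincidences allowed), your argument is complete and coincides with the paper's; trying instead to restore distinctness case by case is the one step that would fail.
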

\begin{proof}
First, we may clearly exclude the case when $G$ is complete.
Let $y^+$ be the last vertex in clockwise direction from $v$ so that 
$[v,y^+)$ forms a clique, that is, we choose $y^+$ so that $[v,y^+)$ is a
clique and maximal among all such cliques.
By choice of $y^+$, there exists a $x^+\in [v,y^+)$ that is non-adjacent to $y^+$.
Similarly, we denote by $y^-$ the last vertex in counterclockwise
direction so that $(y^-,v]$ is a clique, and we let $x^-$ be a non-neighbour
of $y^-$ in $(y^-,v]$. Define $v^+$ to be the clockwise predecessor of $y^+$,
that is, we choose $v^+$ to be the vertex for which $[v,v^+]=[v,y^+)$.

If $y^-\in [v,v^+]$ then put $v^-=y^+$. This ensures that $[v^-,v]$ is a subset
of the clique $(y^-,v]$. As, moreover, $V(G)\sm [v^-,v^+]$ is empty in this case, we are done.
 So, we will assume that $y^-\notin [v,v^+]=[v,y^+)$. Let $v^-$ be the vertex 
for which $[v^-,v]=(y^-,v]$. 
Suppose that $V(G)\sm [v^-,v^+]=[y^+,y^-]$ is not a clique. Thus, there exist non-adjacent 
$r,s\in [y^+,y^-]$, where $s\notin [y^+,r]$. Then $x^+,y^+,r,s,y^-,x^-$
yield three pairs of non-adjacent vertices that are as in  Lemma~\ref{3consec},
which is impossible.
\end{proof}

Next, we will see how the general case can be deduced from the 
\hollow\ case. For this, we first note two simple consequences
 of Lemma~\ref{elemtrig}:

\begin{lemma}\label{inclique}
Let $K$ be a clique in a unit disk graph. 
Then $K+v$ is a clique for any vertex $v\in\conv(K)$.
\end{lemma}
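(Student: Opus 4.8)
The plan is to imitate the proof of Lemma~\ref{elemtrig}, replacing the single triangle by the convex hull of the whole clique. If $v$ already belongs to $K$ there is nothing to prove, so I would assume $v\notin K$; it then suffices to show that $v$ is adjacent to every $u\in K$, for then $v$ is complete to the clique $K$ and $K+v$ is again a clique.

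First I would fix a vertex $u\in K$ and observe that every vertex of $K$ lies within distance~$1$ of $u$: the vertex $u$ itself is at distance~$0$, and every other vertex of $K$ is adjacent to $u$ because $K$ is a clique. Hence the closed unit disk $D$ centred at $u$ contains all of $K$. Since $D$ is convex, it contains $\conv(K)$, and in particular the point $v\in\conv(K)$. Therefore $\dist(u,v)\le 1$, and as $u\in K$ while $v\notin K$ are distinct points, $uv$ is an edge of $G$. Running this over all $u\in K$ gives that $v$ is complete to $K$, which finishes the argument.

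I expect no real obstacle: the whole proof rests on the convexity of the unit disk, exactly as in Lemma~\ref{elemtrig}, of which this statement is simply the generalisation from triangles to arbitrary cliques. If one instead preferred to route the proof literally through Lemma~\ref{elemtrig}, one could invoke Carath\'eodory's theorem to write $v\in\conv(p,q,r)$ for three vertices $p,q,r\in K$ and then argue inside that triangle. That variant, however, forces one to check that $\conv(p,q,r)$ is covered by the three triangles $\conv(u,p,q)$, $\conv(u,q,r)$, $\conv(u,p,r)$ even when $u$ lies outside $\conv(p,q,r)$, a minor geometric nuisance that the direct disk argument sidesteps entirely. For this reason I would present the convex-disk version as the main proof.
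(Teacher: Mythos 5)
Your proof is correct, and it takes a mildly but genuinely different route from the paper's. The paper derives Lemma~\ref{inclique} from Lemma~\ref{elemtrig}: it fixes $k\in K$, observes that every point of $\conv(K)$ lies in some triangle $\conv(k,a,b)$ with $a,b\in K$ (a fan of triangles based at $k$), and then applies the triangle lemma to conclude $k$ is adjacent to $v$. You bypass Lemma~\ref{elemtrig} entirely and instead apply its underlying proof idea directly at the level of the whole clique: the closed unit disk centred at $u\in K$ is convex and contains $K$, hence contains $\conv(K)$ and in particular $v$, so $\dist(u,v)\le 1$. Both arguments rest on exactly the same geometric fact, the convexity of the unit disk; yours is shorter because it avoids the covering-by-triangles observation (which, as you note for the Carath\'eodory variant, requires a moment's care when $u$ need not be a vertex of the covering triangles). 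What the paper's route buys is methodological consistency: before Lemma~\ref{crossing} the author announces that adjacency will never be established by computing distances, only via the two ``principles'' of Lemmas~\ref{crossing} and~\ref{elemtrig}, and the paper's proof of Lemma~\ref{inclique} respects that convention while yours establishes adjacency from $\dist(u,v)\le 1$ directly. That is a stylistic breach, not a mathematical one. One small point you handle correctly: since the paper insists distinct vertices occupy distinct points, $v\notin K$ gives $v\neq u$ as points, so deducing the edge $uv$ from $\dist(u,v)\le 1$ is legitimate.
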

\begin{proof}
Consider any vertex $k\in K$.
Then we see that every point in $\conv(K)$ is contained in a triangle incident with $k$.
Now the assertion follows from Lemma~\ref{elemtrig}.
\end{proof}

\begin{lemma}\label{toclique}
If a vertex $v$ in a unit disk graph is complete to a clique 
$K$ then $v$ is complete to every vertex in $\conv(K)$. 
\end{lemma}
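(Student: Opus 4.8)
The plan is to deduce the statement directly from Lemma~\ref{inclique}, which has just been established. First I would promote $K$ to a larger clique by absorbing $v$: since $v$ is complete to the clique $K$, every vertex of $K$ is adjacent to $v$, and therefore $K':=K+v$ is again a clique of $G$. (Here $v\notin K$, as no vertex is adjacent to itself; in the degenerate case $v\in K$ there is nothing to prove, and likewise we may ignore the trivial coincidence $x=v$.)

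Next, fix an arbitrary vertex $x$ of $G$ lying in $\conv(K)$. Because $\conv(K)\subseteq\conv(K')$, we also have $x\in\conv(K')$. Now I would apply Lemma~\ref{inclique} to the clique $K'$ together with the vertex $x\in\conv(K')$: it yields that $K'+x$ is a clique. Since both $v$ and $x$ belong to this clique, they are adjacent, so $vx\in E(G)$. As $x$ ranged over all vertices of $G$ in $\conv(K)$, this shows that $v$ is complete to $\conv(K)$, as desired.

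I expect essentially no obstacle: the lemma is a one-line corollary of Lemma~\ref{inclique} once one observes that adjoining $v$ to $K$ preserves the clique property while enlarging the convex hull so that it still contains $\conv(K)$. The only points needing a word of care are the harmless degeneracies noted above. If one preferred to bypass Lemma~\ref{inclique} altogether, the same conclusion follows from the geometric idea used to prove Lemma~\ref{elemtrig}: the closed unit disk centred at $v$ contains every vertex of $K$, and, being convex, therefore contains $\conv(K)$; hence every vertex $x\in\conv(K)$ lies within distance $1$ of $v$ and so is adjacent to it.
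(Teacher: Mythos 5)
Your proof is correct, but it takes a genuinely different route from the paper's. The paper deduces Lemma~\ref{toclique} directly from Lemma~\ref{elemtrig}, in exact parallel to its proof of Lemma~\ref{inclique}: since $v$ is complete to $K$, every point of $\conv(K)$ lies in a triangle $\conv(u,v,w)$ with $u,w\in K$ (the fan of triangles with apex $v$ over the hull of $K$), and Lemma~\ref{elemtrig} then gives the adjacency for every vertex in each such triangle. You instead bootstrap from Lemma~\ref{inclique} itself: absorbing $v$ into the clique to form $K'=K+v$, noting $\conv(K)\subseteq\conv(K')$, and applying Lemma~\ref{inclique} to $K'$ and a vertex $x\in\conv(K)$ shows that $K'+x$ is a clique, whence $vx\in E(G)$. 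This is a purely formal reduction that needs no further geometry; its advantage is that the triangle-covering observation is made only once (inside Lemma~\ref{inclique}) and then reused, whereas the paper in effect repeats it. Your fallback argument --- the closed unit disk centred at $v$ is convex, contains $K$, hence contains $\conv(K)$ --- is also valid and is in fact the most elementary of the three; it is the same convexity idea as the paper's one-line proof of Lemma~\ref{elemtrig}, applied to the whole clique rather than to a single triangle. The degeneracies you flag ($v\in K$, $x=v$) are indeed harmless, so there is no gap.
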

\begin{proof}
As $v$ is complete to $K$, every point of $\conv(K)$ lies in a triangle incident with $v$,
which in turn implies by Lemma~\ref{elemtrig}
that every vertex in $\conv(K)$ is adjacent to~$v$.
\end{proof}

We quickly exclude one more easy case.

\begin{lemma}\label{nonedge}
Let $G$ be a unit disk graph with $\alpha(G)\leq 2$,
and let $u$ and $v$ be two non-adjacent vertices. 
Assume that all vertices 
of $G$ lie on one side of the line through $u$ and $v$, that is, 
the line through $u$ and $v$ does not separate any two points of $\conv(G)$.
 Then 
$N(u)$ is the union of two cliques while $N(v)\sm N(u)$ is a single clique.
\end{lemma}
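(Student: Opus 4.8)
The plan is to analyse $N(u)$ through its overlap with $N(v)$, writing $N(u)=(N(u)\cap N(v))\cup(N(u)\sm N(v))$ and showing that each of the two pieces is a clique. Two clique statements come for free from the stability hypothesis. First I would note that $N(u)\sm N(v)$ is a clique: if it contained two non-adjacent vertices $a,b$, then $\{a,b,v\}$ would be a stable set of size three, since neither $a$ nor $b$ lies in $N(v)$. The identical argument with the roles of $u$ and $v$ interchanged shows that $N(v)\sm N(u)$ is a clique, which already settles the second assertion of the lemma.

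It then remains to prove that $N(u)\cap N(v)$ is a clique; granting this, $N(u)$ is the union of the two cliques $N(u)\cap N(v)$ and $N(u)\sm N(v)$, and we are finished. This is the heart of the matter, and it is precisely here that the geometric hypothesis is used: all vertices, and in particular any two common neighbours $a,b$, lie in one of the closed half-planes bounded by the line $L$ through $u$ and $v$, while $u$ and $v$ themselves lie on $L$. Suppose for contradiction that $a,b\in N(u)\cap N(v)$ are non-adjacent. Among the four distinct vertices $u,v,a,b$ the only non-edges are then $uv$ and $ab$, whereas $ua,ub,va,vb$ are all edges. Since $u$ and $v$ lie on $L$ and everything else lies to one side of $L$, the segment $uv$ supports the convex hull of the four points, so $u$ and $v$ are consecutive on that hull. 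I would now split into two cases. If the four points are in convex position, the two diagonals of the quadrilateral cross; as $u,v$ are consecutive, these diagonals are either $\{ua,vb\}$ or $\{ub,va\}$, in either case a pair of genuine edges of $G$. By Lemma~\ref{crossing} the crossing forces a triangle on $\{u,v,a,b\}$, but every triangle on these four vertices requires one of the absent edges $uv$ or $ab$, a contradiction. If the four points are not in convex position, then one of $a,b$, say $b$, lies in $\conv(u,v,a)$; since $a$ is adjacent to both $u$ and $v$, Lemma~\ref{elemtrig} shows that $a$ is adjacent to every vertex of $\conv(u,a,v)$, in particular to $b$, again contradicting $ab\notin E(G)$.

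I expect the convex-position step to be the main obstacle, mostly in the bookkeeping: one must be certain that $u$ and $v$ really are consecutive on the hull and handle the degenerate situations (a common neighbour lying on $L$, hence on the segment $uv$, or three of the four points being collinear), in which the quadrilateral collapses to a triangle or a segment. In each such degenerate case the displaced point still lies in a triangle spanned by a vertex adjacent to both $u$ and $v$, so Lemma~\ref{elemtrig} once more supplies the missing edge, and no genuine distance computation is needed. It is worth stressing that the one-sided hypothesis is indispensable here: without it the common neighbourhood of two non-adjacent vertices is in general only co-bipartite by~\eqref{nbhprop} rather than a single clique, and one recovers a clique unconditionally only once $\dist(u,v)\ge\sqrt3$, as in Lemma~\ref{commonnbh}.
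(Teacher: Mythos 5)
Your proof is correct and takes essentially the same route as the paper: reduce everything to showing that $N(u)\cap N(v)$ is a clique (the differences $N(u)\sm N(v)$ and $N(v)\sm N(u)$ being cliques directly from $\alpha(G)\leq 2$), then for two non-adjacent common neighbours apply Lemma~\ref{elemtrig} when the four points are not in convex position, and Lemma~\ref{crossing} to a crossing pair of genuine edges when they are, the one-sided hypothesis ensuring that $u$ and $v$ are consecutive on the hull so the crossing pair avoids the non-edges $uv$ and $ab$. The only difference is presentational: the paper excludes $u$ or $v$ lying in the hull of the other three directly from $uv\notin E(G)$ via Lemma~\ref{elemtrig}, where you invoke the supporting line; both arguments are sound.
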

\begin{proof}
It suffices to show that $N(u)\cap N(v)$ is a clique. 
Consider two common neighbours $x,y$ of $u$ and $v$. 
If $x\in\conv(u,y,v)$ or if $y\in\conv(u,x,v)$ then 
$x$ and $y$ are adjacent by Lemma~\ref{elemtrig}.
So, suppose that neither is the case. 
In a similar way, it follows from $uv\notin E(G)$
that  neither 
$u$ nor $v$ can be contained in the interior of the
convex hull  of the other three. Thus, all 
four vertices lie on the boundary of $\conv(u,v,x,y)$.
Because $\conv(G)$ lies on one side of the line through $u$ and $v$, 
we  deduce that one of the two pairs 
of edges, $uy,vx$ and $ux,vy$, cross. That $x$ and $y$ are adjacent now follows 
from Lemma~\ref{crossing}.
\end{proof}

In our proof of the key lemma there is only one obstacle left, 
which we will overcome with the help of the next lemma. The lemma
is based on the insight that, provided the graph is contained in a 
unit disk, the vertices on the boundary of $\conv (G)$ largely determine
the behaviour of the interior vertices. 
Slightly more precisely, we know from Lemma~\ref{convcase} that the outer
vertices can be partitioned into three cliques, and we will see
that each  interior vertex can easily be assigned to one of these
cliques---with the exception of two small zones of vertices. Handling 
these two zones will be the main difficulty.

\begin{lemma}\label{3cliques}
Let $G$ be a unit disk graph with $\alpha(G)\leq 2$. 
If $G$ is contained  in a unit disk then
 $V(G)$ is the union of three cliques, two of which have a common vertex.
\end{lemma}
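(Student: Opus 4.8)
The plan is to lift Lemma~\ref{convcase} from the boundary to the whole graph. Write $G_{\mathrm{out}}$ for the subgraph induced on the vertices lying on the boundary of $\conv(G)$. Since $\conv(G_{\mathrm{out}})=\conv(G)$ and none of these vertices lies in the interior of this polygon, $G_{\mathrm{out}}$ is \hollow, and $\alpha(G_{\mathrm{out}})\leq\alpha(G)\leq 2$. Applying Lemma~\ref{convcase} at some boundary vertex $v$ produces $v^-,v^+$ so that $A:=[v,v^+]$, $B:=[v^-,v]$ and $C:=V(G_{\mathrm{out}})\setminus[v^-,v^+]$ are cliques, with $v\in A\cap B$. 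This settles the outer vertices; the task is then to distribute the interior vertices among $A$, $B$, $C$ while keeping these sets cliques and retaining $v\in A\cap B$.

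The engine for the distribution is a single consequence of stability two: for any vertex $w$, the set $S_w$ of its non-neighbours is a clique, since two non-adjacent non-neighbours of $w$ would form a stable set of size three together with $w$. Thus, to be allowed to throw $w$ into one of $A,B,C$ it is enough that $S_w$ miss that arc entirely, for then $w$ is complete to it. The routine interior vertices are those lying in one of the caps $\conv(A)$, $\conv(B)$, $\conv(C)$: by Lemma~\ref{inclique} each such $w$ can be absorbed into the corresponding clique, and since every vertex I ever add to an arc will lie in the convex hull of that arc, repeated application of Lemma~\ref{inclique} keeps the three sets cliques throughout.

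First I would clear out all interior vertices sitting in the three caps as above, and then those in the central triangle spanned by $v$ and the two endpoints of $C$, which I expect to be complete to $C$ and hence absorbable via Lemma~\ref{toclique}. What remains are the interior vertices in the two small zones flanking this triangle---the regions squeezed between the chord of $C$, a boundary edge, and a chord through $v$, one on the $A$-side and one on the $B$-side. These are the two zones the preceding discussion anticipates.

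The heart of the proof, and its main obstacle, is these two zones. For a vertex $w$ there the easy criterion can fail because $S_w$ may meet two of the three arcs at once, and even when $w$ is complete to an arc one must still guarantee that distinct zone vertices assigned to the same clique are mutually adjacent. My plan is to argue that $S_w$ cannot meet all three arcs: were $a\in S_w\cap A$, $b\in S_w\cap B$ and $c\in S_w\cap C$ chosen, these three pairwise adjacent boundary vertices interleave around $\conv(G)$ with the interior non-neighbour $w$, and I would extract a forbidden configuration---either $w\in\conv(a,b,c)$, which by Lemma~\ref{elemtrig} forces $w$ adjacent to $a$, or a pair of crossing edges on $\{w,a,b,c\}$ spanning no triangle, contradicting Lemma~\ref{crossing}. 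Here the hypothesis that $G$ fits into a unit disk is exactly what I would use to confine $S_w$ to the arc of the boundary lying far from $w$, ruling out the spread across all three arcs and, with a little more care along the same lines (via Lemma~\ref{3consec}), delivering the mutual adjacency needed for consistency. Converting this confinement into a clean argument that works uniformly over both zones is the delicate step.
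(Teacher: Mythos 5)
There is a genuine gap, and it sits exactly where the hypothesis ``$G$ is contained in a unit disk'' has to be used. The paper's first move is to add the centre $p$ of the containing disk as a new vertex: $p$ is adjacent to \emph{every} vertex of $G$, and all regions of the decomposition are then cones with apex $p$ over arcs of the boundary, so that each region is a clique by Lemma~\ref{inclique} (for instance $[r^+,r^-]+p$ is a clique, hence so is every vertex in its convex hull). You never introduce $p$; instead you take the boundary vertex $v$ as the apex, and this breaks your ``routine'' central step. A vertex $w$ in the triangle $\conv(v,r^+,r^-)$ need \emph{not} be complete to $C=[r^+,r^-]$: the set $\{v\}\cup C$ is in general not a clique ($v$ typically has non-neighbours in $C$ --- that is the whole reason three cliques are needed), so a vertex $w$ lying close to $v$ inside that triangle can have non-neighbours in $C$ just as $v$ does. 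Neither Lemma~\ref{inclique} nor Lemma~\ref{toclique} applies; the latter runs in the wrong direction, since it extends ``complete to the clique $C$'' to $\conv(C)$ but cannot establish completeness to $C$ from membership in a triangle whose corners do not form a clique. So the vertices of your central triangle are in fact just as problematic as those of your two zones, and your scheme has no tool for them.

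Beyond this, the plan for the zones defers precisely the points where the paper must work hardest. ``$S_w$ cannot meet all three arcs'' is not sufficient: one needs (i) the confinement of the non-neighbours of a zone vertex $t$ to a clique arc $[x,y]$ with $t\notin\conv([x,y])$, which is the paper's claim~\eqref{xy}; (ii) a case analysis on the position of $x,y$ that crucially exploits a \emph{minimality} condition on the choice of $v^+,v^-$ --- your application of Lemma~\ref{convcase} imposes no such condition, and without it the analogues of the paper's Cases~1 and~3 collapse; and (iii) a proof that zone vertices assigned to the same target clique are pairwise adjacent, both within one zone (the paper gets this for free because each zone is a cone from $p$, hence a clique) and across the two zones (the paper's claim~\eqref{TR}, a separate crossing argument via Lemma~\ref{crossing}), which you acknowledge but dismiss as ``a little more care.'' Finally, you never dispose of the case in which two consecutive boundary vertices are non-adjacent; the paper settles it at the outset via Lemma~\ref{nonedge} before assuming~\eqref{bndry}. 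In short, the proposal reproduces the easy outer layer of the paper's proof but is missing its central device (the universal vertex supplied by the unit disk) and, with apex $v$ in its place, the absorption argument for the middle region is false, not merely unfinished.
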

\begin{proof}
We will show that there is a vertex $b$ on the boundary of $\conv(G)$, and three cliques 
$B^+,B^-,R$ that cover $V(G)$ so that $b\in B^+\cap B^-$. 

By assumption, there is a point $p\in\mathbb R^2$ of distance at most~$1$
to every vertex in $G$. 
Now, adding $p$ as a vertex to $G$ is entirely harmless: $G+p$ has still 
stability at most two and, assuming we find cliques as desired in $G+p$
with $b\neq p$,
we obtain such cliques for $G$ by simply deleting $p$ from the cliques.
Thus, we may assume that
\begin{equation}\label{universal}
\emtext{
$G$ has a vertex $p$ that is adjacent to every other vertex.
}
\end{equation}

\newcommand{\myb}{\ensuremath{F}}
Denote by $\myb$ the set of vertices of $G$ on the boundary of $\conv(G)$.
Then the graph induced by $\myb$ is a \hollow\ unit disk graph, and we will 
continue to use the interval notation for this induced subgraph of $G$,
and only for this graph. This means that a set 
$[u,v]$ is always understood with respect 
to the \hollow\ unit disk graph on $\myb$, and that in particular $[u,v]\subseteq\myb$.

In light of Lemma~\ref{nonedge} we may assume that 
\begin{equation}\label{bndry}
\emtext{
two consecutive vertices  on the boundary of $\conv(G)$ 
are adjacent.
}
\end{equation}

Pick any vertex $b\in \myb$ other than $p$.
By Lemma~\ref{convcase} we may 
choose $b^+,b^-\in\myb$ so that 
$[b,b^+]$, $[b^-,b]$ and $\myb\sm [b^-,b^+]$ are cliques that meet at most in $b$,
and such that $\myb\sm [b^-,b^+]$ is minimal subject to this condition.
Observe that $b^+=b^-$ is impossible:
This would entail $b=b^+=b^-$ but both 
of 
$[b,b^+]$ and $[b^-,b]$ contain two vertices by~\eqref{bndry}.
Thus, $\myb\sm [b^-,b^+]=(b^+,b^-)$.
If $(b^+,b^-)\neq\emptyset$ let $r^+,r^-$ be so that $[r^+,r^-]=(b^+,b^-)$.
If, on the other hand, $(b^+,b^-)=\emptyset$
then we put $r^+=b^-$ and $r^-=b^+$.

We now have that
\begin{equation}\label{5cliques}
\begin{minipage}[c]{0.8\textwidth}\em
every pair of consecutive vertices in $\myb$ lies in one of
the following cliques:
$[b,b^+]$, $[b^+,r^+]$, $(b^+,b^-)$, $[r^-,b^-]$
and $[b^-,b]$.
\end{minipage}\ignorespacesafterend 
\end{equation} 
If $(b^+,b^-)\neq\emptyset$ then trivially every pair of consecutive vertices
lies in one of the five sets. In the case of $(b^+,b^-)=\emptyset$
we have $[r^-,b^-]=[b^+,b^-]$.

It remains only to verify that $[b^+,r^+]$ and $[r^-,b^-]$ are cliques as claimed.
Indeed, $b^+$ and $r^+$, as well as $b^-$ and $r^-$, are consecutive
vertices on the boundary. Thus, by~\eqref{bndry}, $[b^+,r^+]$ is simply the edge $b^+r^+$,
and $[r^-,b^-]$ coincides with the edge $b^-r^-$. This proves~\eqref{5cliques}.
\medskip

We define
\begin{equation*}
\begin{minipage}[c]{0.8\textwidth}\em
$B^+=\conv([b,b^+]+p)\cap V(G)$, $\,\,B^-=\conv([b^-,b]+p)\cap V(G)$,\\
$T^+=\conv(b^+,r^+,p)\cap V(G)$,
$\quad T^-=\conv(r^-,b^-,p)\cap V(G)$,\\
and $R=\conv([r^+,r^-]+p)\cap V(G)$.
\end{minipage}\ignorespacesafterend 
\end{equation*} 
See Figure~\ref{playground} for an illustration.
Observe that as $p$ is adjacent with every other vertex, it follows that
every vertex is contained in one of the five sets. Moreover, from~\eqref{5cliques}
and Lemma~\ref{inclique}
we deduce that $B^+$, $B^-$, $T^+$, $T^-$ and $R$ are cliques.

\begin{figure}[ht]
\centering
\input{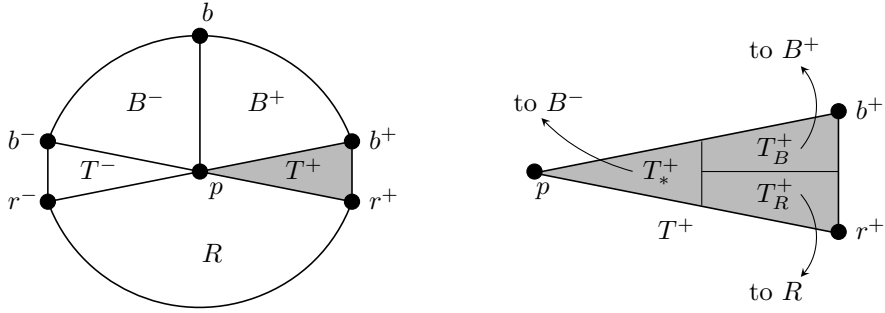}
\caption{The five cliques (left); how to divide up $T^+$ (right)}\label{playground}
\end{figure}

Assume for the moment that $(b^+,b^-)=\emptyset$. Then 
$[b^+,r^+]=[b^+,b^-]=[r^-,b^-]$, which means that $T^+=T^-$.
Thus, we see that $V(G)$ is the union 
of the three cliques $B^+$, $B^-$ and $T^+$, two of which contain $b$.
As we are done in this case, we will assume from now on that
\begin{equation}\label{bs}
\emtext{
$(b^+,b^-)=[r^+,r^-]\neq\emptyset$.
}
\end{equation}

The rest of the proof will be spent on dividing up $T^+\cup T^-$
among the other cliques, so that we obtain three cliques that cover all of $G$.
More precisely, we will partition $T^+$ and $T^-$ into sets $T^+_B,T^+_R,T^+_*$
and $T^-_B,T^-_R,T^-_*$, respectively, so that 
$B^+\cup T^+_B\cup T^-_*$,
$B^-\cup T^-_B\cup T^+_*$ and $R\cup T^+_R\cup T^-_R$ are cliques; see Figure~\ref{playground}. 
As $b$ is contained in the first two of these three, this will complete the proof
of the lemma. In order to do so, we define
\begin{itemize}
\item $T^+_B:=\{t\in T^+:t\emtext{ is complete to }B^+\}$;
\item $T^+_R:=\{t\in T^+\sm T^+_B:t\emtext{ is complete to }R\}$; and
\item $T^+_*:=T^+\sm(T^+_B\cup T^+_R)$.
\end{itemize}
The sets $T^-_B$, $T^-_R$ and $T^-_*$ are defined symmetrically.

We claim that
\begin{equation}\label{TR}
\emtext{
$T^+_R\cup T^-_R$ is  a clique.
}
\end{equation}
Suppose that is not the case. As both $T^+$ and $T^-$ are cliques, 
this means there are non-adjacent $t^+\in T^+_R$ and $t^-\in T^-_R$. 
By definition of $T^+_R$, the vertex  $t^+$ is not complete to $B^+$,
which in turn implies with Lemma~\ref{toclique} that $t^+$ has a 
non-neighbour $s^+$ in $[b,b^+]$. Symmetrically, we find an $s^-\in [b^-,b]$
that is non-adjacent to $t^-$.

We will focus on $t^+,t^-$ and the following  vertices that appear
in this order on the boundary of $\conv (G)$: 
$b^-, s^-, s^+,b^+$. 
Observe that $t^+$ is adjacent to $s^-$ and $t^-$ is adjacent to $s^+$;
otherwise we would obtain a contradiction to $\alpha(G)\leq 2$. Moreover, 
$t^+$ and $b^+$ are adjacent since both are elements of the clique $T^+$. 
In the same way, we have $t^-b^-\in E(G)$. 

Suppose that $t^+b^-\in E(G)$.
Then $t^+$ is adjacent to $p$, to $b^-$ and to $r^-\in R$ (the
last adjacency is because of $t^+\in T^+_R$).
With Lemma~\ref{toclique} we conclude now that 
 $t^+$ is complete to $T^-$,
which is impossible as $t^-\in T^-$. Thus, $t^+$ and $b^-$ are 
non-neighbours, as well as, symmetrically, $t^-$ and $b^+$.

Next, let us note that $s^-$, $s^+$, $b^+$, $b^-$ are pairwise distinct. 
Indeed, the fact that $t^+$ is adjacent to $b^+$ and $s^-$
but not to $s^+$ nor to $b^-$ implies that 
 $s^+\neq s^-$, $s^+\neq b^+$, 
$s^-\neq b^-$  and $b^+\neq b^-$. All other identities
are excluded by the fact that $s^-$, $s^+$, $b^+$, $b^-$ appear in 
this order on $\myb$.

To conclude, we find two disjoint paths $s^+t^-b^-$ and $s^-t^+b^+$
with interleaved endvertices: both $(s^+,b^-)$ and $(b^-,s^+)$
meet $\{s^-,b^+\}$. 
Thus an edge of the first path needs
to cross an edge of the second path. However, one may easily check 
with Lemma~\ref{crossing} that 
none of the following pairs of edges may cross:
$s^-t^+$ and $s^+t^-$;\,
$b^+t^+$ and $b^-t^-$;\,  
$s^+t^-$ and $b^+t^+$;\, $s^-t^+$ and $b^-t^-$.
This finishes the proof of~\eqref{TR}.
\medskip

We show next that
\begin{equation}\label{T*}
\emtext{
$T^+_*$ is complete to $B^-\cup T^-$, and $T^-_*$ is complete to $B^+\cup T^+$.
}
\end{equation}
By symmetry it suffices to show that any $t\in T^+_*$ is complete to $B^-\cup T^-$.
We first observe that 
\begin{equation}\label{xy}
\begin{minipage}[c]{0.8\textwidth}\em
there are distinct
 non-neighbours $x,y$ of $t$ so that $[x,y]$ is a clique, $t$ is 
complete to $(y,x)$ and $t\notin \conv([x,y])$.
\end{minipage}\ignorespacesafterend 
\end{equation} 
To see~\eqref{xy}, denote by $X$ the set of non-neighbours of $t$ in 
$\myb$. Note that $t$ has by definition of $T^+_*$
one non-neighbour in $B^+$ and one in $R^+$. Lemma~\ref{toclique}
implies that $t$ has therefore a non-neighbour in $[b,b^+]$  and in $[r^+,r^-]$,
and consequently that $|X|\geq 2$.
Since $\alpha(G)\leq 2$, the set  $X$ has to be a clique. We deduce from 
Lemma~\ref{inclique} that $t\notin\conv(X)$. 
Thus, there are two vertices $x,y\in X$ so that
the line through $x$ and $y$ separates 
$t$ from $\conv(X)$. Moreover, $t$ does not lie on the line.
 We choose $x$ and $y$ so that $X\subseteq [x,y]$. 

Now consider the unit disk graph on $[x,y]+t$, which is a \hollow\ unit disk graph.
Since $y$ is non-adjacent 
to $t$, which in turn is non-adjacent to $x$, we can employ Lemma~\ref{3consec}
to deduce that $[x,y]$ is a clique. This finishes~\eqref{xy}.
\medskip

We  distinguish four cases.

\case\label{c1} $x\in [r^+,b]$ and $y\in [r^+,b)$.

Since $(b,b^+]$ is disjoint from $\{x,y\}$, we have that 
$(b,b^+]\subseteq (x,y)$ or $(b,b^+]\subseteq (y,x)$.
Then $[b,b^+]\subseteq [x,y)$ or $[b,b^+]\subseteq (y,x)$. 
By definition of $T^+_*$, $t$ has a non-neighbour in $B^+$,
and thus also in $[b,b^+]$ by Lemma~\ref{toclique}.
As 
$t$ is complete to $(y,x)$ by~\eqref{T*},
it follows that $[b,b^+]\subseteq [x,y)$.
As a subset of  $[x,y]$, the set 
$[b,r^+]$ is a clique that strictly contains $[b,b^+]$.
By~\eqref{bs} this implies that $\myb\sm [b^-,r^+]$ is strictly
smaller than $(b^+,b^-)=\myb\sm [b^-,b^+]$, which contradicts
the choice of $b^+$ and $b^-$. 

\case\label{c2} $\{x,y\}\subseteq [b^-,b^+]$.

As $t$ is complete to $(y,x)$ but has by definition of $T^+_*$ 
a non-neighbour in $R$ and thus in $[r^+,r^-]$ it follows that 
$[r^+,r^-]\subseteq (x,y)$. Thus, $[b^+,b^-]\subseteq [x,y]$ is a clique
by~\eqref{xy}, which implies with Lemma~\ref{inclique}
that $T^+\cup R\cup T^-$ is a clique. This, however, is impossible as $t\in T^+_*$
is supposed to have a non-neighbour in~$R$. 

\case\label{c3} $x\in [r^+,r^-]$ and $y\in [b,b^+]$.

In this case, we find that $[b^-,b]\subseteq (x,y]$, and thus that
$[r^-,b]\subseteq [x,y]$ is a clique that strictly contains $[b^-,b]$,
which is impossible by~\eqref{bs} and the choice of~$b^+$ and~$b^-$.

\case\label{c4} $x\in (b,b^+]$ and $y\in [r^+,r^-]$.

As $[b^-,b]\subseteq (y,x)$, it follows from~\eqref{xy} that $t$ is complete
to $[b^-,b]$ and thus to $B^-$ by Lemma~\ref{toclique}. Next, let us show 
that $t$ is complete to $T^-$ as well. If $r^-=y$ then all of $[r^+,r^-]$
is contained in the clique $[x,y]$ as well as $b^+$. From Lemma~\ref{inclique}
we deduce that $R\cup T^+$ is a clique, which is impossible as $t\in T^+_*\subseteq T^+$
cannot be complete to $R$.
Thus, $r^-\in (y,x)$, which means that $t$ is adjacent to $r^-$. 
As $t$ is also adjacent to $b^-\in B^-$ it follows from Lemma~\ref{toclique}
that $t$ is complete to $T^-$, as desired.

\medskip
The Cases~1--\arabic{ccount} cover all possible values for $x$ and $y$.
Indeed, assume first that neither $x$ nor $y$ is equal to $b$. Then 
any pair of $x,y$ not treated in Case~\ref{c1} 
either lies completely in $(b,b^+]$, or one of $x,y$ lies in $(b,b^+]$
and the other in $[r^+,b)$. However, the former configuration is covered by
Case~\ref{c2}; of the latter it remains to consider the case when
one of $x,y$ lies in $(b,b^+]$ and the other in $[r^+,r^-]$. This is dealt with 
in Cases~\ref{c3} and~\ref{c4}.
 So, assume now that $x=b$. 
Then $y\in[r^+,b)$ falls under Case~\ref{c1}, and $y\in (b,r^+)=(b,b^+]$
under Case~\ref{c2}. Finally, if $y=b$ then $x\in[b^-,b^+]$ is covered
by Case~\ref{c2}, while $x\in [r^+,r^-]$ is covered by Case~\ref{c3}.

\medskip 

We therefore have proved~\eqref{T*}.
Now the definitions of $T^+_B,T^+_R,T^+_*$
and $T^-_B,T^-_R,T^-_*$ together with~\eqref{TR} and~\eqref{T*} 
imply directly that $B^+\cup T^+_B\cup T^-_*$,
$B^-\cup T^-_B\cup T^+_*$ and $R\cup T^+_R\cup T^-_R$ are cliques. \sloppy
\end{proof}

We can finally prove our key lemma:

\begin{proof}[Proof of Lemma~\ref{xkey}]
We need to find three cliques whose union is $V(G)$ so that 
two of them share a vertex.
Assume first that there are two vertices $u,v$ of distance $\geq\sqrt 3$.
Because of $\alpha(G)\leq 2$, we see that $(N(u)\sm N(v))+u$ 
as well as $N(v)\sm N(u)$ induce cliques. 
These two together with $(N(u)\cap N(v))+u$, which is a clique by Lemma~\ref{commonnbh},
form three cliques as stated.
If all pairs of vertices have distance at most $\sqrt 3$ then the 
assertion follows directly from Lemmas~\ref{distordisk}
and~\ref{3cliques}.
\end{proof}

\medbreak

\noindent{\bf Acknowledgment}\\[3pt]
I am very grateful for inspiring discussions with Ross Kang, as well
as with Naho Fujimoto.

\bibliographystyle{amsplain}
\bibliography{../graphs}

\small
\vskip2mm plus 1fill
\parindent=0pt


$\,$\vfill
Version 30 Sept 2011
\bigbreak

Henning Bruhn
{\tt <bruhn@math.jussieu.fr>}\\
\'Equipe Combinatoire et Optimisation\\
Universit\'e Pierre et Marie Curie\\
4 place Jussieu\\
75252 Paris cedex 05\\
France

\end{document}